\RequirePackage{fix-cm}
\documentclass[11pt]{article}
\usepackage[a4paper,margin=1in]{geometry}
\usepackage{amsmath,amssymb,amsthm,mathtools}
\usepackage{enumitem,indentfirst,microtype}
\usepackage[hidelinks]{hyperref}
\hypersetup{
  pdftitle={On the Tur\'an number for the Cartesian product of an odd cycle and a tree},
  pdfauthor={Ya-Hong Chen, Yong Lu, Quan Sun, Qi Wu, and Long-Tu Yuan}
}
\numberwithin{equation}{section}
\newtheorem{theorem}{Theorem}[section]
\newtheorem{lemma}[theorem]{Lemma}
\newtheorem{corollary}[theorem]{Corollary}
\newcommand{\ex}{\operatorname{ex}}
\newcommand{\EX}{\operatorname{EX}}
\newcommand{\M}{\mathcal M}
\newcommand{\ind}[1]{\overline K_{#1}}
\newcommand{\ess}{\operatorname{ess}}
\title{\fontsize{18}{22}\selectfont On the Tur\'an number for the Cartesian product of\\an odd cycle and a tree}
\author{Ya-Hong Chen\thanks{Department of Mathematics, Lishui University, No.1 Xueyuan Road, Lishui, 323000, Zhejiang, PR China. E-mail: \texttt{yhchen0611@163.com}.}
\qquad Yong Lu\thanks{School of Mathematics and Statistics, Jiangsu Normal University, Xuzhou, Jiangsu 221116, P.R. China. Email: \texttt{luyong@jsnu.edu.cn}. Supported by the National Natural Science Foundation of China (Nos.~12371348 and 12201258).}
\qquad Quan Sun\thanks{School of Mathematical Sciences, East China Normal University, 500 Dongchuan Road, Shanghai 200240, P.R. China. Email: \texttt{52280155016@stu.ecnu.edu.cn}.}
\qquad Qi Wu\thanks{School of Mathematics and Statistics, Jiangsu Normal University, Xuzhou, Jiangsu 221116, P.R. China. Email: \texttt{wuqimath@163.com}.}
\qquad Long-Tu Yuan\thanks{School of Mathematical Sciences, Key Laboratory of MEA (Ministry of Education), Shanghai Key Laboratory of PMMP, and Nantong Institute for Applied Mathematics and Artificial Intelligence, East China Normal University, Shanghai 200241, China. E-mail: \texttt{ltyuan@math.ecnu.edu.cn}.}}
\date{}
\begin{document}
\maketitle
\begin{abstract}
Let \(T\) be a fixed tree on at least two vertices. We determine the
Tur\'an number and all extremal graphs for the Cartesian product of an
odd cycle \(C_{2m+1}\) and \(T\), for every fixed integer \(m\ge1\) and all
sufficiently large \(n\), in terms of its decomposition family. Every
extremal graph is the join of an independent set and an extremal graph
for that family. The proof combines the
Erd\H{o}s--S\'os theorem with weak stability and progressive induction.
\end{abstract}

\section{Introduction}

For a graph \(H\), let \(\ex(n,H)\) be the largest number of edges in a
graph on \(n\) vertices that does not contain \(H\) as a subgraph, and let
\(\EX(n,H)\) be the family of extremal graphs. For a family
\(\mathcal H\) of graphs, \(\ex(n,\mathcal H)\) and
\(\EX(n,\mathcal H)\) are defined by excluding every member of
\(\mathcal H\) as a subgraph. The Erd\H{o}s--Stone--Simonovits theorem
determines the leading term of the Tur\'an number from the chromatic
number~\cite{ErdosSimonovits,ErdosStone}.

For graphs \(H_1,H_2\), the Cartesian product \(H_1\square H_2\) has
vertex set \(V(H_1)\times V(H_2)\). Vertices \((x_1,x_2)\) and
\((y_1,y_2)\) are adjacent if either \(x_1=y_1\) and
\(x_2y_2\in E(H_2)\), or \(x_2=y_2\) and \(x_1y_1\in E(H_1)\).

We briefly review work on Tur\'an numbers for Cartesian products.
Brada\v{c} et al.~\cite{Grid} proved that the Tur\'an number for the
Cartesian product of any fixed path and any fixed tree, each with at
least one edge, is of order \(n^{3/2}\). Gao et al.~\cite{EvenPrism}
obtained the same order \(n^{3/2}\) for the Cartesian product of a fixed
even cycle of length at least eight and an edge. Wang and
Yang~\cite{TreeProducts} proved that the Tur\'an number for the
Cartesian product of any two fixed trees, each with at least one edge,
is of order \(n^{3/2}\).
For the Cartesian product of a fixed star with at least two edges and
an edge, Zhao et al.~\cite{StarEdge} obtained more precise upper and
lower bounds of order \(n^{3/2}\). These results concern Cartesian
products of bipartite graphs. He et al.~\cite{HeLiFeng} determined
the exact Tur\'an number and all extremal graphs for the Cartesian
product of any fixed odd cycle and an edge when \(n\) is sufficiently
large. In this paper, we replace the edge by an arbitrary fixed tree.

An important input is the Erd\H{o}s--S\'os theorem on embedding trees
under a sharp average-degree condition. The conjecture was proposed
by Erd\H{o}s and S\'os in 1963. Ajtai, Koml\'os, Simonovits and
Szemer\'edi later announced a proof for all sufficiently large trees,
but full details were not made publicly available; see
Wood~\cite{WoodES} for this history. In 2026, OpenAI's GPT-6 Astra
discovered a proof of the full conjecture~\cite{GPT6Astra} in the
FrontierMath Erd\H{o}s project of Adamczewski and
Bloom~\cite{AdamczewskiBloom}; a Lean proof was also produced in
that project.
Wood~\cite{WoodES} gave an exposition, and Riordan and
Scott~\cite{RiordanScott} presented a simplified proof and
determined the equality cases.

The theorem already has applications to multicolour Ramsey numbers
of trees; see Wood~\cite[Theorem~2]{WoodES}. In related Tur\'an
problems, Zhu et al.~\cite{ZhuWangZhangZhang} obtained sharp bounds
and extremal graphs for suspensions of balanced trees, assuming the
Erd\H{o}s--S\'os conjecture for all subtrees of the given tree.
This assumption is now guaranteed by the full theorem. The present
paper gives an application to the exact Tur\'an problem for
Cartesian products of an odd cycle and an arbitrary fixed tree.

Our proof uses Simonovits's weak stability lemma~\cite{Simonovits1974}
to describe the approximate structure of extremal graphs. His
progressive induction~\cite{Simonovits1968} then gives the exact
Tur\'an number from a bound on the number of edges lost when a fixed
number of vertices is deleted. Yuan~\cite{YuanWheels} also used
progressive induction to determine the Tur\'an numbers for odd wheels.

Throughout the paper, \(m\ge1\) is a fixed integer, \(C_{2m+1}\) is the cycle
of length \(2m+1\), \(T\) is a fixed tree on \(k\ge2\) vertices, and
\(F=C_{2m+1}\square T\). The decomposition family \(\M(F)\) is defined
in Section~2. For graphs \(J,J'\) with disjoint vertex sets, their \emph{join}
\(J\vee J'\) is obtained by adding every edge between their vertex
sets. We write \(\ind{b}\) for the independent graph on \(b\) vertices
and use \(K_0\) for the empty graph. For every nonnegative integer
\(a\), define \(g(a)=\ex(a,\M(F))\), with \(g(0)=0\) and
\(\EX(0,\M(F))=\{K_0\}\). Let
\(\Psi(n)=\max_{a+b=n}\{ab+g(a)\}\), where the maximum is over
nonnegative integers. Let \(\mathcal G(n)\) be the family of all graphs
\(J\vee\ind{b}\) such that \(a+b=n\),
\(J\in\EX(a,\M(F))\), and \(ab+g(a)=\Psi(n)\).

\begin{theorem}\label{thm:main}
For all sufficiently large \(n\), \(\ex(n,F)=\Psi(n)\) and \(\EX(n,F)=\mathcal G(n)\).
\end{theorem}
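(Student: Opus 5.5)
We follow the Simonovits scheme: construction, weak stability, then progressive induction. Throughout, $\chi(F)=3$, and $\M(F)$ is the decomposition family of $F$ defined in Section~2. A key fact, to be checked first, is that $\M(F)$ contains a forest. Indeed, 2-colour $C_{2m+1}\square T$ so that one class spans the graph induced by two adjacent "layers" $\{x_1,x_2\}\times V(T)$, where $x_1x_2$ is an edge of the cycle. That graph is $K_2\square T$, which is bipartite; the crucial point is that the remaining layers can be properly 2-coloured into the bipartite part. Consequently $\M(F)$ contains a forest (or at least a tree-like member, e.g.\ a matching or $T$ itself), and so $g(a)=O(a)$ by the Erd\H{o}s--S\'os theorem. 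This makes $\Psi(n)=n^2/4+O(n)$, attained with $a=n/2+O(1)$.

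\textbf{Lower bound.} Take $J\vee\ind{b}$ with $J\in\EX(a,\M(F))$. We claim it is $F$-free. Suppose $F\subseteq J\vee\ind{b}$. The vertices of $F$ lying in $\ind{b}$ form an independent set $I$. Then $F-I$ lies in $J$, and by definition of $\M(F)$ the graph $F-I$ contains a member of $\M(F)$ when $V(F)\setminus I$ has the complementary bipartite structure. This contradicts $J$ being $\M(F)$-free. The subtlety is that $\M(F)$ must be defined via "$F\subseteq M\vee\ind{t}$" in the standard way, which makes this step immediate. Hence $\ex(n,F)\ge\Psi(n)$.

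\textbf{Upper bound via progressive induction.} Let $G$ be $F$-free with $e(G)\ge\Psi(n)$ and $n$ large.
\begin{enumerate}[label=(\roman*)]
\item By weak stability (Erd\H{o}s--Simonovits), $G$ has a max-cut partition $A\cup B$ with $o(n^2)$ edges inside the parts.
\item Delete the $o(n)$ vertices of small degree. Using progressive induction, we may assume $\delta(G)\ge n/2-O(1)$ and also $e(G)-e(G-v)\ge\Psi(n)-\Psi(n-1)$ for each vertex. Otherwise we pass to a smaller extremal counterexample with a positive surplus, and surplus accumulation gives a contradiction.
\item Show that every vertex has $\Omega(n)$ neighbours on both sides only if there are few such vertices. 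Then show there are none, since such a vertex together with the dense bipartite structure would embed $F$ (cycle layers alternate sides, and the odd cycle closes at a vertex adjacent to both sides).
\item Show that $G[A]$ is $\M(F)$-free on the "high-degree" part. Any $M\in\M(F)$ in $G[A]$ extends, using common neighbourhoods in $B$ of size $\Omega(n)$ (each vertex misses $O(1)$ of $B$), to a copy of $F$. Symmetrically, $G[B]$ is $\M(F)$-free.
\item Show that one side, say $B$, is independent. If both sides have edges, we use the Erd\H{o}s--S\'os theorem. An edge structure on both sides lets the cycle $C_{2m+1}$ use an odd "shortcut" on one side while the copies of $T$ lie across. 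More precisely, the decomposition family includes graphs like $M_1\cup M_2$ split over both sides, and we show $e(G[A])+e(G[B])\le g(|A|)$ plus the edge deficit from missing cross edges. This forces $G=J\vee\ind{b}$ with $J\in\EX(a,\M(F))$ and $ab+g(a)=\Psi(n)$.
\end{enumerate}

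\textbf{Main obstacle.} We expect step~(v) to be the main obstacle: ruling out edges in both parts simultaneously and pinning down exact equality. Here I would first try to show that every edge inside $B$ together with any copy of a star $K_{1,k-1}$ (or of $T$) inside $A$ creates $F$. The idea is to route the odd cycle through the $B$-edge in one layer and use the cross bipartite graph for the other layers. Then, by Erd\H{o}s--S\'os, edges in $B$ cap $e(G[A])$ at roughly $(k-2)|A|/2$, which loses more cross edges than it gains, contradicting maximality. Equality in the counting then identifies the extremal family $\mathcal G(n)$.
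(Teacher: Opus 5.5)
\textbf{Step (iii) is false, and it hides the main difficulty.} Extremal graphs can contain vertices with linearly many neighbours on both sides. The graph $K_{k-1}\vee\ind{a-k+1}$ has matching number $k-1$, so by Lemma~\ref{lem:decomp} it contains no member of $\M(F)$. Hence $K_{k-1}\vee K_{a-k+1,b}$ is $F$-free, even though its $k-1$ clique vertices are adjacent to everything. Such graphs really lie in $\mathcal G(n)$. For $k=2$ one has $\M(F)=\{P_4\}$, and the star $K_{1,a-1}$ is $P_4$-extremal whenever $3\nmid a$. For $n\equiv2\pmod 6$ the maximum defining $\Psi(n)$ is attained only at values of $a$ with $3\nmid a$, so $K_{1,a-1}\vee\ind{n-a}\in\mathcal G(n)$.

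Your heuristic that such a vertex ``closes the odd cycle'' overlooks that $F$ contains $k$ vertex-disjoint odd cycles, namely the layers $L_x$. One vertex repairs at most one of them. By contrast, $k$ vertices complete to a large complete bipartite graph do force $F$, via the colour class of size $k$. Controlling up to $k-1$ such vertices is most of the paper's work:
\begin{itemize}
\item Lemma~\ref{lem:core} treats a core, i.e.\ $\tau(T)$ vertices complete to two mutually complete independent $\ell$-sets.
\item In the core-free case, Lemma~\ref{lem:cleaning} uses $|A_K||B_K|=O(n)$ for $|K|=\tau(T)$ to show that the high-degree vertices contribute like at most $\tau(T)-1$ universal vertices. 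This is what yields $e(G[A])\ge(k-\tau(T))n/2-\eta n$.
\end{itemize}
Your minimum-degree reduction in step (ii) does not replace this. In Lemma~\ref{lem:core}, the deficit caused by a mixed vertex $z\in Z$ is spread over a bounded core, so no single vertex need fall below $\Psi(n)-\Psi(n-1)$. The exact value of that threshold also depends on the unknown $O(1)$ behaviour of $g$. The paper therefore deletes $2t_0$ random core vertices, so that the accumulated deficit beats the slack $\binom{k}{2}$ in the increment bound of Lemma~\ref{lem:decomp}.

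\textbf{Step (v) rests on a second false claim.} Every layer needs its own edge inside $A$ or inside $B$, and the layers are disjoint. So for $k\ge3$ the following graph is $F$-free: $A$ complete to $B$, $G[A]$ a star $K_{1,k-1}$ (or a copy of $T$) plus isolated vertices, and $G[B]$ a single edge. Its internal edges have matching number at most $\lfloor k/2\rfloor+1<k$, and the cross bipartite graph cannot carry any layer.

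What an embedding actually needs (Lemma~\ref{lem:coordinates}, Corollary~\ref{cor:pinned}) is a disjoint edge $e_x$ for every $x\in V(T)$. There must be an edge between $e_x$ and $e_y$ when $xy\in E(T)$ stays within one side, and a matching of size two between them when it crosses. To exclude an edge $uv$ of $B$ that is almost complete to $A$, the paper pins $uv$ to the layer of a centroid $z$. It then finds an edge blowup of $T-z$ inside $A\cap N(v)$ by Erd\H{o}s--S\'os. This only needs $e(G[A])>(\sigma-1+\delta)|A|$ with $\sigma\le\lfloor k/2\rfloor\le k-\tau(T)$ (Lemma~\ref{lem:pinnededge}), and it comes after Lemma~\ref{lem:cleanpair} has shown that one side spans at most $\eta n$ edges. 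Your proposed cap of roughly $(k-2)|A|/2$ on $e(G[A])$ does not follow from anything you state.

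\textbf{A smaller error.} Your justification that $\M(F)$ contains a forest is also incorrect as written. The vertices outside two adjacent copies of $T$ do not form an independent set. Neither a matching nor $T$ is feasible, since every member of $\M(F)$ has a component with matching number at least $k$. The forest members are the edge blowups of $T$.
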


We first establish the properties of \(\M(F)\) and the embedding
lemmas. We then show that every extremal graph belongs to
\(\mathcal G(n)\) or admits a deletion of fixed size satisfying a
suitable edge bound. Progressive induction completes the proof.

\section{Preliminaries and notation}

All graphs are finite, simple, and undirected. For a graph \(G\),
write \(v(G)=|V(G)|\) and \(e(G)=|E(G)|\). Its neighborhood and degree
at \(x\) are denoted by \(N_G(x)\) and \(d_G(x)\); the subscript
is omitted when the ambient graph is clear. We use the convention
\(\bigcap_{x\in\varnothing}N_G(x)=V(G)\).
For \(A\subseteq V(G)\),
\(G[A]\) denotes the induced subgraph; we use
\(G-A=G[V(G)\setminus A]\) and \(d_A(x)=|N_G(x)\cap A|\).
If \(x\notin A\), let \(\overline d_A(x)=|A|-d_A(x)\).
We identify an edge with the set of its two endpoints.
For disjoint \(A,B\subseteq V(G)\), denote by \(E(A,B)\) the set of edges between
them, and define \(e(A,B)=|E(A,B)|\) and
\(\overline e(A,B)=|A||B|-e(A,B)\).
The graph \(G[A,B]\) has vertex set \(A\cup B\) and edge set
\(E(A,B)\). A set \(A\) is \emph{complete to} \(B\) if every
possible edge between them is present. We write \(\Delta(G)\)
for the maximum degree, \(\alpha(G)\) for the independence number,
\(\nu(G)\) for the matching number, and
\(\tau(G)\) for the minimum size of a vertex cover.
For a positive integer \(p\), let \([p]=\{1,\ldots,p\}\).

We use the standard hierarchy notation
\(a\gg b>0\): after \(a\) and all other preceding parameters have
been fixed, \(b\) is chosen sufficiently small in terms of them.
Thus a hierarchy such as
\(1\gg\varepsilon_1\gg\varepsilon_2\gg n_0^{-1}>0\)
specifies the order of the choices; it does not assert a numerical
inequality with a universal ratio. All constants may depend on the
fixed graph \(F\), and \(n_0\) is chosen last. Estimates below are
understood for \(n\ge n_0\).

Identify \(V(C_{2m+1})\) with \(\mathbb Z_{2m+1}\), with addition
modulo \(2m+1\). For \(x\in V(T)\), the set
\(L_x=\mathbb Z_{2m+1}\times\{x\}\) is its \emph{layer} in \(F\).
Let \(V(T)=T^0\dot\cup T^1\) be the bipartition of \(T\).
Assign color \(0\) to
\((\{0\}\times T^0)\cup(\{2m\}\times T^1)\).
The remaining vertices of each layer induce a path on \(2m\) vertices.
Color these paths alternately with \(1,2\), starting with color \(1\)
at \((1,x)\) for \(x\in T^0\) and at \((0,x)\) for \(x\in T^1\).
For every \(xy\in E(T)\) and \(a\in\mathbb Z_{2m+1}\), the vertices
\((a,x),(a,y)\) have different colors. Thus the coloring is proper,
with color classes of sizes \(k,mk,mk\).
Since \(F\) contains an odd cycle, \(\chi(F)=3\).

The notion of a decomposition family was introduced by
Simonovits~\cite{Simonovits1983}.

For a graph of chromatic number three \(H\), a graph \(M\) is \emph{feasible}
for \(H\) if \(H\subseteq(M\cup\ind{s})\vee\ind{s}\) for some
positive integer \(s\), with all three vertex sets disjoint. An embedding
uses at most \(v(H)\) vertices in each independent set, so feasibility
is equivalent to \(H\subseteq(M\cup\ind{v(H)})\vee\ind{v(H)}\).
The \emph{decomposition family} \(\M(H)\) consists, up to isomorphism,
of the feasible graphs minimal under ordinary, not necessarily induced,
subgraph containment.
Denote by \(\ess(J)\) the graph obtained from \(J\) by deleting its isolated
vertices. Equivalently, \(M\) is feasible if and only if
\(\ess(H-I)\subseteq M\) for some independent set
\(I\subseteq V(H)\). Thus \(\M(H)\) is the finite family of minimal
graphs among these \(\ess(H-I)\). Every member of
\(\M(H)\) has at most \(v(H)\) vertices.

An \emph{edge blowup} of a tree replaces each vertex \(x\) by an
edge \(e_x\), with these \emph{replacement edges} pairwise disjoint in their vertices,
and adds one \emph{connector edge} for each tree edge \(xy\), joining
arbitrarily chosen endpoints of \(e_x,e_y\). An edge blowup of the tree \(T\) is a tree on \(2k\) vertices with \(2k-1\) edges,
and its replacement edges form a perfect matching. For a forest,
apply this definition to each component.

We use the Erd\H{o}s--S\'os theorem, proved in full by
OpenAI's GPT-6 Astra~\cite{GPT6Astra}, in the formulation of
Riordan and Scott~\cite{RiordanScott}: for every tree \(D\) on at
least two vertices and every nonnegative integer \(a\),
\begin{equation}\label{eq:erdos-sos}
 \ex(a,D)\le\frac{v(D)-2}{2}a.
\end{equation}
We apply this bound only to edge blowups of trees, in
Lemmas~\ref{lem:decomp} and~\ref{lem:pinnededge}.
For paths, the same bound is the classical Erd\H{o}s--Gallai
theorem~\cite{ErdosGallai}.
We also use K\H{o}nig's theorem: the
vertex cover and matching numbers of a bipartite graph are equal.
Hall's theorem states that a bipartite graph with parts \(A,B\) has a
matching covering \(A\) if and only if
\(\bigl|\bigcup_{x\in S}N(x)\bigr|\ge|S|\) for every \(S\subseteq A\).

\begin{lemma}\label{lem:decomp}
Every edge blowup of \(T\) belongs to \(\M(F)\). Every member of
\(\M(F)\) has a connected component
with matching number at least \(k\). For every integer \(a\ge0\),
\((k-1)a-\binom{k}{2}\le g(a)\le(k-1)a\), and
\(\ex(n,F)\ge\Psi(n)\).
For nonnegative integers \(n,t\) with \(n\ge2t\),
\(\Psi(n)-\Psi(n-2t)\ge t(n-t)+(k-1)t-\binom{k}{2}\).
\end{lemma}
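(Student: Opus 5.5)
The plan is to prove the assertions of Lemma~\ref{lem:decomp} in the order: (i) a structural fact about every \(F-I\) with \(I\) independent, which gives the second assertion; (ii) feasibility and then minimality of edge blowups; (iii) the bounds on \(g\); (iv) the lower bound \(\ex(n,F)\ge\Psi(n)\); (v) the increment bound for \(\Psi\).

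\emph{Step (i).} Let \(I\subseteq V(F)\) be independent and put \(R_x=L_x\setminus I\), viewed as a subset of \(\mathbb Z_{2m+1}\). Two facts are immediate.
\begin{itemize}
\item An independent set of \(C_{2m+1}\) has at most \(m\) vertices, so each \(R_x\) has at least \(m+1\) elements and spans an edge of the layer.
\item For every \(xy\in E(T)\), the vertices \((a,x),(a,y)\) are adjacent, so \(I_x\cap I_y=\varnothing\) and \(R_x\cup R_y=\mathbb Z_{2m+1}\).
\end{itemize}
I want one component of \(F-I\) that contains an edge inside every layer. These \(k\) layer edges are disjoint, so that component has matching number at least \(k\). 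Since every \(M\in\M(F)\) contains some \(\ess(F-I)\), this gives the second assertion.

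To build the component, root \(T\) and propagate along tree edges. Suppose a component \(K\) contains an edge in layer \(x\), and let \(P=[p,q]\) be the maximal run of consecutive coordinates in \(R_x\) containing it, so \(p-1,q+1\in I_x\subseteq R_y\). If two consecutive coordinates of \(P\) lie in \(R_y\), they give a layer-\(y\) edge joined to \(K\) by a cross edge. Otherwise \(R_y\) alternates on \(P\). If some endpoint of \(P\), say \(p\), lies in \(R_y\), then \((p-1,y)(p,y)\) is a layer-\(y\) edge attached to \(K\) through \((p,x)(p,y)\).

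The remaining case is \(|P|\) odd with \(p,p+2,\dots,q\in I_y\). Here the argument has to go beyond \(P\): use that \(I_x\) and \(I_y\) partition the complement of \(R_x\cap R_y\) and count around the odd cycle. I expect this case analysis to be the main obstacle. If it resists, I would change the target to a maximal run of \(R_x\) chosen to extend the current component, possibly by choosing at each step a run adjacent to a cross edge.

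\emph{Step (ii).} For feasibility of a given edge blowup \(B\), I construct \(I\) layer by layer. In layer \(x\), choose a shift \(c_x\) and let \(R_x=\{c_x,c_x+1,c_x+3,\dots,c_x+2m-1\}\). Then \(R_x\) spans exactly the one layer edge \(\{c_x,c_x+1\}\), and \(I_x\), the set of remaining coordinates, is independent in the layer.
\begin{itemize}
\item If \(c_y=c_x+1\), a direct check gives \(R_x\cap R_y=\{c_x+1\}\) and \(I_x\cap I_y=\varnothing\). So there is exactly one cross edge, joining the second endpoint of \(e_x\) to the first endpoint of \(e_y\).
\item If \(c_y=c_x-1\), the single cross edge joins the first endpoint of \(e_x\) to the second endpoint of \(e_y\).
\end{itemize}
To obtain the other two endpoint combinations, I also use the mirrored pattern \(\{c,c+1,c-2,c-4,\dots\}\) and check that these shifts still give a single common coordinate. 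Then I choose the shifts along a BFS order of \(T\). Once this holds, \(\ess(F-I)=B\), so \(B\) is feasible.

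Minimality then follows from step (i). Any feasible \(M'\subseteq B\) has a component with a matching of size \(k\) inside the \(2k\)-vertex tree \(B\). That component must be spanning and connected, so it equals \(B\), and hence \(M'=B\).

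\emph{Steps (iii)--(v).} For the upper bound, suppose \(e(J)>(k-1)a\). By \eqref{eq:erdos-sos} applied to a blowup \(B\) with \(v(B)=2k\), the graph \(J\) contains \(B\in\M(F)\); hence \(g(a)\le(k-1)a\).

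For the lower bound, take \(K_{k-1}\vee\ind{a-k+1}\) when \(a\ge k-1\), and \(K_a\) otherwise. Every component of these graphs has a vertex cover of size at most \(k-1\), or at most \(2k-1\) vertices, so matching number less than \(k\). By step (i) they are \(\M(F)\)-free. The edge count \((k-1)(a-k+1)+\binom{k-1}{2}=(k-1)a-\binom k2\) gives the bound for \(a\ge k-1\). For \(a<k-1\), use \(\binom a2\ge(k-1)a-\binom k2\), which is equivalent to \((a-k)(a-k+1)\ge0\).

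For (iv), let \(J\in\EX(a,\M(F))\) and suppose \(F\subseteq J\vee\ind{b}\). Then \(I=V(F)\cap\ind b\) is independent and \(\ess(F-I)\subseteq J\). So \(J\) contains a member of \(\M(F)\), a contradiction; this gives \(\ex(n,F)\ge\Psi(n)\).

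For (v), take \((a,b)\) optimal for \(\Psi(n-2t)\) and evaluate the pair \((a+t,b+t)\). The product term increases by \(t(a+b)+t^2=t(n-t)\). By (iii), \(g(a+t)-g(a)\ge(k-1)t-\binom k2\).
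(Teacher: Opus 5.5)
\paragraph{Gap in Step (i).} Step (i), which is the lemma's second assertion, is not proved, and the open case is not merely technical. The local claim you propagate is false. That claim says: if a component of $F-I$ contains a layer-$x$ edge, it also contains a layer-$y$ edge whenever $xy\in E(T)$.

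For a counterexample, take $m=3$, $T=K_2$ with $V(T)=\{x,y\}$, and $I=\{(0,x),(4,x),(1,y),(3,y)\}$. This set is independent, because $\{0,4\}$ and $\{1,3\}$ are disjoint independent sets of $C_7$. The component of $F-I$ containing $(1,x)(2,x)$ is $\{(1,x),(2,x),(3,x),(2,y)\}$. It has two layer-$x$ edges and no layer-$y$ edge. This is exactly your remaining case, with $P=[1,3]$ and $1,3\in I_y$.

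Your fallback of choosing other runs later cannot help. There are no other layers to route through, and the component is fixed once the first edge is chosen. The good component, $\{(5,x),(6,x),(4,y),(5,y),(6,y),(0,y)\}$, must be selected globally from the start. Your minimality argument in Step (ii) and the $\M(F)$-freeness of your lower-bound graphs in Step (iii) both cite Step (i), so they are also unsupported as written.

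\paragraph{The missing idea: a parity invariant.} This is how the paper argues. For $xy\in E(T)$, let $P_a=\{(a,x),(a,y)\}$ and $r_a=|I\cap P_a|\in\{0,1\}$, and split $F[L_x\cup L_y]-I$ into the sets $P_a\setminus I$. Exactly $2-r_a-r_{a+1}$ cycle edges join $P_a\setminus I$ to $P_{a+1}\setminus I$. The components of this two-layer graph are the maximal runs between breaks where $r_a=r_{a+1}=1$, so each component contains an even number of cycle edges.

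Let $h_x(Q)$ count the layer-$x$ edges of a component $Q$ of $F-I$. The previous paragraph gives $h_x(Q)\equiv h_y(Q)\pmod 2$ for every such $Q$. Since $\sum_Q h_x(Q)=2m+1-2|I\cap L_x|$ is odd, some $Q$ has $h_x(Q)$ odd. Then $h_y(Q)$ is odd for every $y\in V(T)$, so $Q$ has an edge in every layer. In the example above, the good component has $1$ and $3$ layer edges.

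\paragraph{The rest of your plan.} With this fix, the remaining steps are sound. Your shifted sets in Step (ii) are the paper's $A_f$ and $I_f$ for $f=\{c_x,c_x+1\}$. Your graph $K_{k-1}\vee\ind{a-k+1}$ is a valid alternative to the paper's $qK_{2k-1}\cup K_s$. Minimality can also be proved without Step (i). Every $\ess(F-I)$ has at least $2k-1$ edges: one in each layer, and one for each tree edge because $|R_x|+|R_y|\ge 2m+2$.

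Your ``mirrored pattern'' $\{c,c+1,c-2,\dots,c-2m+2\}$ is the same set as $\{c,c+1,c+3,\dots,c+2m-1\}$ modulo $2m+1$, so it adds nothing. It is also unnecessary. You may decide which endpoint of $e_y$ receives coordinate $c_y$, so the two shifts $c_y=c_x\pm1$ already realize every connector edge.
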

\begin{proof}
Let \(D\) be an edge blowup of \(T\). For a cycle edge \(f=\{a,a+1\}\), let
\(I_f=\{a+2,a+4,\ldots,a+2m\}\) and
\(A_f=\mathbb Z_{2m+1}\setminus I_f\).
Then \(I_f\) is independent and \(A_f\) induces only the edge \(f\).
For distinct cycle edges \(f,g\) meeting at \(c\), we have
\(I_f\cap I_g=\varnothing\) and \(A_f\cap A_g=\{c\}\).
Root \(T\) at \(r\). Choose \(f_r\in E(C_{2m+1})\) and a bijection
from the endpoints of \(e_r\) to the coordinates in \(f_r\).
Suppose that \(y\) is a child of \(x\), and that \(ab\) is the connector
edge with \(a\in e_x\), \(b\in e_y\). If \(a\) has coordinate \(c\),
choose for \(f_y\) the cycle edge other than \(f_x\) incident with \(c\).
Assign coordinate \(c\) to \(b\), and the other coordinate of \(f_y\)
to the other endpoint of \(e_y\). Each nonroot vertex has a unique
parent, so each replacement edge receives exactly one assignment.
Denote by \(f_x\) the cycle edge chosen at \(x\), and set
\(I=\bigcup_{x\in V(T)}(I_{f_x}\times\{x\})\).
Then \(I\) is independent, \(|I|=mk\), and \(F-I\) is isomorphic to
\(D\cup\ind{k(m-1)}\). Therefore
\(F\subseteq(D\cup\ind{k(m-1)})\vee\ind{mk}\), and every blowup is
feasible.

For any independent \(I\subseteq V(F)\), at most \(m\) vertices
are deleted from each layer, leaving at least one cycle edge.
For \(xy\in E(T)\), independence also makes the deleted coordinate
sets in \(L_x,L_y\) disjoint. Their complements therefore intersect
in at least one coordinate, leaving at least one edge between those
layers. Thus \(\ess(F-I)\) has at least \(k+(k-1)=2k-1\) edges.
A feasible blowup has exactly this many edges and no isolated
vertices, so none of its proper subgraphs is feasible. It belongs
to \(\M(F)\).

For the connected matching assertion, let \(Q\) be a component of
\(F-I\), and let \(h_x(Q)\) count its cycle edges in \(L_x\).
For \(xy\in E(T)\), group the vertices of
\(F[L_x\cup L_y]-I\) by the pairs
\(P_a=\{(a,x),(a,y)\}\), and write \(r_a=|I\cap P_a|\).
Each \(P_a\setminus I\) is nonempty and connected.
Independence gives \(r_a\in\{0,1\}\), and exactly
\(2-r_a-r_{a+1}\) cycle edges join
\(P_a\setminus I\) to \(P_{a+1}\setminus I\).
A component of \(F[L_x\cup L_y]-I\) consists of the nonempty sets
\(P_a\setminus I\) with consecutive indices around the cycle; there is no edge
between \(P_a\setminus I\) and \(P_{a+1}\setminus I\) exactly when
\(r_a=r_{a+1}=1\). If some such pair has no edge between its sets,
list the indices of each component consecutively from \(a\) to \(b\)
around the cycle. Then \(r_a=r_b=1\). Summing
\(2-r_a-r_{a+1}\) within a component gives an even number: every
interior value is counted twice and the two end values sum to two.
A component consisting of one set \(P_a\setminus I\) has no cycle
edges. If every consecutive pair of these sets has an edge between
them, the total number of cycle edges is
\(2(2m+1)-2\sum_{a\in\mathbb Z_{2m+1}}r_a\), also even.
Since \(Q\cap(L_x\cup L_y)\) is a union of components of
\(F[L_x\cup L_y]-I\), the sum \(h_x(Q)+h_y(Q)\) is even, and hence
\(h_x(Q)\equiv h_y(Q)\pmod2\).

Fix \(x\in V(T)\). Summing \(h_x(Q)\) over the components \(Q\) of
\(F-I\) gives \(2m+1-2|I\cap L_x|\), which is odd. Thus some
component \(Q\) has odd \(h_x(Q)\). Applying the parity equality along
the paths of \(T\), we obtain odd \(h_y(Q)\) for every \(y\in V(T)\).
Choosing one edge of \(Q\) in each layer gives a matching of size
\(k\). The component \(Q\) has edges and remains a component of
\(\ess(F-I)\). If \(M\) is a minimal feasible graph, then
\(\ess(F-I)\subseteq M\) for some independent set \(I\). Since
\(\ess(F-I)\) is itself feasible, the minimality of \(M\) implies that
\(M\) is isomorphic to \(\ess(F-I)\). This proves the assertion for \(\M(F)\).

Write \(a=q(2k-1)+s\), where \(q,s\) are nonnegative integers and
\(0\le s<2k-1\). The disjoint union \(qK_{2k-1}\cup K_s\) does not contain any member of \(\M(F)\) as a subgraph,
since none of its components can contain a connected graph with a
matching of size \(k\). It has
\((k-1)a-s(2k-1-s)/2\ge (k-1)a-\binom{k}{2}\) edges.
By the Erd\H{o}s--S\'os bound applied to a blowup on \(2k\) vertices,
\(g(a)\le(k-1)a\).
If \(J\) does not contain any member of \(\M(F)\) as a subgraph, then
\(J\vee\ind{b}\) does not contain \(F\) as a subgraph:
a copy would yield an independent \(I\subseteq V(F)\) with
\(\ess(F-I)\subseteq J\), containing a member of \(\M(F)\).
For \(a+b=n\) and \(J\in\EX(a,\M(F))\), the graph
\(J\vee\ind{b}\) has \(ab+g(a)\) edges. Maximizing over \(a,b\)
gives \(\ex(n,F)\ge\Psi(n)\).
For the increment bound, choose \((a,b)\) attaining
\(\Psi(n-2t)\). From the linear bounds,
\(g(a+t)-g(a)\ge (k-1)t-\binom{k}{2}\). The pair
\((a+t,b+t)\) satisfies \((a+t)+(b+t)=n\), and
\((a+t)(b+t)-ab=t(a+b)+t^2=t(n-t)\). This proves the last assertion.
\end{proof}

Let \(T_p(n)\)
denote the complete multipartite graph on \(n\) vertices with \(p\)
parts whose sizes differ by at most one.

\begin{theorem}[{Simonovits~\cite[Lemma~3.1.1, pp.~359--360]{Simonovits1974}}]\label{thm:weak}
Let \(\mathcal H=\{H_1,\ldots,H_q\}\), where \(q\ge1\), be a fixed
family of graphs. Set
\(p+1=\min_{1\le i\le q}\chi(H_i)\ge3\) and
\(r=\max_{1\le i\le q}v(H_i)\).
Suppose that, for some \(i\in[q]\) with \(\chi(H_i)=p+1\),
\(H_i\subseteq D\vee T_{p-1}((p-1)r)\) for a fixed tree \(D\). Fix a
nonnegative integer \(\lambda\) and a real number \(\varepsilon>0\).
Then, for all sufficiently large \(n\), the following holds. Let \(G\)
be a graph on \(n\) vertices that does not contain any member of
\(\mathcal H\) as a subgraph and satisfies
\(e(G)\ge\ex(n,\mathcal H)-\lambda n\). Choose
\(V(G)=V_1\dot\cup\cdots\dot\cup V_p\) minimizing
\(\sum_{i\in[p]}e(G[V_i])\). Then:
\begin{enumerate}[label=\textup{(\roman*)},leftmargin=2em]
\item \(\sum_{1\le i<j\le p}\overline e(V_i,V_j)=O(n)\);
\item \(e(G[V_i])=O(n)\) for every \(i\in[p]\);
\item \(\bigl||V_i|-n/p\bigr|=O(\sqrt n)\) for every \(i\in[p]\);
\item there is a constant \(c_\varepsilon\) such that, for every
\(i\in[p]\), at most \(c_\varepsilon\) vertices \(x\in V_i\) have
\(d_{V_i}(x)>\varepsilon n\) or
\(\sum_{j\in[p]\setminus\{i\}}\overline d_{V_j}(x)>\varepsilon n\).
\end{enumerate}
The lower bound on \(n\) and all implicit constants may depend on
\(\mathcal H,D,\lambda,\varepsilon\).
\end{theorem}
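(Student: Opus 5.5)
Parts (iii) and (iv) follow by counting once (i) and (ii) are known, so the plan is to prove (i) and (ii) first, in the form
\[
\sum_{i<j}\overline e(V_i,V_j)=O(n),\qquad \sum_i e(G[V_i])=O(n).
\]
The lower bound comes from \(T_p(n)\). Every \(H_j\) has chromatic number at least \(p+1\), so \(T_p(n)\) contains no member of \(\mathcal H\). Hence \(e(G)\ge e(T_p(n))-\lambda n\).

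\textbf{Step 1: approximate structure.} Combining this lower bound with the Erd\H{o}s--Simonovits stability theorem, \(G\) is \(o(n^2)\)-close to \(T_p(n)\). The partition minimizing \(\sum_i e(G[V_i])\) then satisfies \(\sum_{i<j}\overline e(V_i,V_j)=o(n^2)\) and \(|V_i|=n/p+o(n)\). Minimality also gives a local property: every \(x\in V_i\) has \(d_{V_j}(x)\ge d_{V_i}(x)\) for all \(j\ne i\), since otherwise moving \(x\) would lower the objective.

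\textbf{Step 2: bad vertices.} Fix a small \(\delta>0\). Call \(x\) \emph{bad} if it misses more than \(\delta n\) edges to the other classes, and let \(B\) be the set of bad vertices. By Step 1, \(|B|=o(n)\).

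I first want to show \(|B|=O(1)\). The standard route is a deletion argument. Compare \(e(G)\) with the bound \(\ex(n-|B|,\mathcal H)+|B|(1-1/p-\delta')n\). Here each bad vertex has degree noticeably below \((1-1/p)n\); this uses that its internal degree is at most its cross degree by minimality. If \(|B|\) exceeded a large constant, the deficit \(\delta'' n|B|\) would exceed \(\lambda n\) plus the \(O(n)\) error terms, contradicting the lower bound on \(e(G)\).

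\textbf{Step 3: the core, where the tree \(D\) is used.} Suppose \(G[V_i\setminus B]\) contains a copy of \(D\). Each of its at most \(r\) vertices is good. So in every \(V_j\), \(j\ne i\), their common neighbourhood \(W_j\) has size at least \(|V_j|-r\delta n\ge n/(2p)\). The sets \(W_j\), \(j\ne i\), span a \((p-1)\)-partite graph missing only \(o(n^2)\) edges. By the Erd\H{o}s--Stone/K\H{o}v\'ari--S\'os--Tur\'an supersaturation argument it contains \(T_{p-1}((p-1)r)\). Together with the copy of \(D\) this yields \(D\vee T_{p-1}((p-1)r)\supseteq H_i\), a contradiction.

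Hence \(G[V_i\setminus B]\) is \(D\)-free. A graph of average degree at least \(2v(D)\) has a subgraph of minimum degree at least \(v(D)\) and so contains \(D\) greedily. Therefore \(e(G[V_i\setminus B])\le v(D)n\). Since \(|B|=O(1)\), the edges at \(B\) contribute at most \(O(n)\). This proves (ii).

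For (i), write
\[
e(G)\le \sum_{i<j}|V_i||V_j|-\sum_{i<j}\overline e(V_i,V_j)+O(n)\le e(T_p(n))-\sum_{i<j}\overline e(V_i,V_j)+O(n).
\]
Comparing with \(e(G)\ge e(T_p(n))-\lambda n\) gives (i).

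\textbf{Step 4: the counting parts.} The same comparison gives
\[
\sum_{i<j}|V_i||V_j|\ge e(T_p(n))-O(n).
\]
Since \(e(T_p(n))-\sum_{i<j}|V_i||V_j|\) is, up to \(O(1)\), equal to \(\tfrac12\sum_i(|V_i|-n/p)^2\), this yields (iii). For (iv), the internal degrees in \(V_i\) sum to \(2e(G[V_i])=O(n)\). So at most \(O(1/\varepsilon)\) vertices have \(d_{V_i}(x)>\varepsilon n\). Likewise the missing cross degrees sum to \(O(n)\) by (i), which bounds the second kind of exceptional vertex. This gives \(c_\varepsilon\).

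\textbf{Main obstacle.} I expect Step 2, reducing the bad set to \(|B|=O(1)\), to be the hardest part. It requires a careful induction or deletion argument on \(n\), in the style of progressive induction, to absorb the \(\lambda n\) slack. If the direct comparison is too lossy, I would first iteratively delete vertices of degree below \((1-1/p-\delta)n\) and track the surplus \(e(G)-e(T_p(v(G)))\), which increases by a linear amount at each deletion. This shows that only boundedly many deletions can occur.
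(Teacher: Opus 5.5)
The paper gives no proof of this statement; it is quoted from Simonovits's 1974 paper. So your outline has to stand on its own, and Step~2 has a genuine gap, in two places.

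\emph{The degree claim for bad vertices is false.} Minimality does not give that a bad vertex has degree noticeably below \((1-1/p)n\). The relations \(d_{V_i}(x)\le d_{V_j}(x)\) for all \(j\ne i\) only yield \(d(x)\le\frac{p}{p-1}\sum_{j\ne i}d_{V_j}(x)\le n-\frac{p}{p-1}\delta n+o(n)\). For \(p=2\), minimality alone does not exclude a vertex of \(V_1\) with \(0.45n\) neighbours in each class; such a vertex is bad and has degree about \(0.9n\).

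The missing ingredient is that only \(O(1)\) vertices have internal degree at least \(cn\). By minimality, such a vertex has at least \(cn\) neighbours in \emph{every} class. Among a large constant number of them in one class, iterated averaging over subsets gives \(r\) vertices whose common neighbourhoods in all \(p\) classes have linear size. These common neighbourhoods span a \(p\)-partite graph missing \(o(n^2)\) edges, so they contain \(T_p(pr)\). Adding the \(r\) vertices gives \(T_{p+1}((p+1)r)\supseteq H_i\), a contradiction.

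\emph{The comparison with \(\ex(n-|B|,\mathcal H)\) is circular.} It needs \(\ex(n-b,\mathcal H)\le e(T_p(n-b))+O(n)\), which is the kind of bound (i)--(ii) are supposed to deliver. With only Erd\H{o}s--Stone you get \(|B|=o(n)\), not \(O(1)\). Your fallback has the same defect. A surplus that grows linearly per deletion is bounded above only by \(o(v^2)\), unless you already have an \(O(v)\) bound for the terminal graph. The fallback also never touches bad vertices of high internal degree, since those need not have low total degree.

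No deletion or induction is needed once you notice that Step~3 does not use \(|B|=O(1)\). It gives \(\sum_i e(G[V_i\setminus B])\le v(D)n\) for any \(B\). Put \(S=\sum_{i<j}\overline e(V_i,V_j)\) and \(I=\sum_i e(G[V_i])\).
\begin{itemize}
\item From \(e(T_p(n))-\lambda n\le e(G)\le e(T_p(n))-S+I\) you get \(S\le\lambda n+I\).
\item The bad vertices give \(S\ge|B|\delta n/2\).
\item Take \(c=\delta/4\), and let \(C'\) bound the number of vertices of internal degree at least \(cn\). Then \(I\le v(D)n+C'n+|B|\delta n/4\).
\end{itemize}
Hence \(|B|\le4(\lambda+v(D)+C')/\delta\). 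It follows that \(I=O(n)\) and \(S=O(n)\), which are (ii) and (i), and your Step~4 then finishes. A minor slip: the copy of \(D\) has \(v(D)\) vertices, which need not be at most \(r\). This is harmless since \(D\) is fixed.
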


\section{Proof of \texorpdfstring{Theorem~\ref{thm:main}}{Theorem 1.1}}

\subsection{Embedding layer edges}

\begin{lemma}\label{lem:coordinates}
Let \(G\) be a graph and let \(\lambda\ge0\). Suppose that \(V(T)=S_0\dot\cup S_1\), that
\(U_0,U_1\subseteq V(G)\) are disjoint, and that there is a family
\((e_x)_{x\in V(T)}\) of edges of \(G\) with pairwise disjoint endpoint sets.
Assume also that the following conditions hold.
\begin{enumerate}[label=\textup{(\alph*)},leftmargin=2.7em,
                  itemsep=0pt,topsep=2pt]
\item for each \(i\in\{0,1\}\) and each \(xy\in E(T[S_i])\),
      \(E(e_x,e_y)\ne\varnothing\);
\item if \(xy\in E(T)\), \(x\in S_0\), and \(y\in S_1\), then
      \(G[e_x,e_y]\) contains a matching of size two;
\item for every \(i\in\{0,1\}\) and every \(x\in S_i\),
      \(e_x\subseteq U_i\);
\item for every \(i\in\{0,1\}\), \(|U_i|>v(F)(\lambda+1)\), and every vertex of
      \(U_i\) has at most \(\lambda\) nonneighbors in \(U_{1-i}\).
\end{enumerate}
Then there is an embedding of \(F\) into \(G\) in which, for every
\(x\in V(T)\), one cycle edge in the layer \(L_x\) is mapped onto
\(e_x\).
\end{lemma}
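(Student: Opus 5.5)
The plan is to write down an explicit blueprint assigning each vertex of \(F\) either to a prescribed endpoint of some \(e_x\) or to one of the sets \(U_0,U_1\). Every edge of \(F\) should then be either an edge guaranteed by (a) or (b), or a pair with one end in \(U_0\) and the other in \(U_1\). The vertices not on the edges \(e_x\) are then embedded greedily using (d).

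\textbf{Blueprint.} For each \(x\in V(T)\) I choose a cycle edge \(f_x=\{c_x,c_x+1\}\) and a bijection from \(f_x\) to the endpoints of \(e_x\); the cycle edge \(f_x\times\{x\}\) of \(L_x\) is mapped onto \(e_x\). For \(x\in S_i\), the remaining coordinates \(c_x+2,\ldots,c_x+2m\) form a path. I put coordinates \(c_x+2j\) into \(U_{1-i}\) and \(c_x+2j+1\) into \(U_i\). Since \(c_x+2m\) lies in \(U_{1-i}\), every edge of the layer outside \(f_x\) joins \(U_i\) to \(U_{1-i}\), or joins an endpoint of \(e_x\) (in \(U_i\) by (c)) to \(U_{1-i}\).

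As in the proof of Lemma~\ref{lem:decomp}, I root \(T\) at \(r\), choose \(f_r\) and its bijection arbitrarily, and propagate down the tree.

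\emph{Edges inside \(S_i\).} Let \(y\) be a child of \(x\) with \(x,y\in S_i\). Pick an edge \(ab\in E(e_x,e_y)\) using (a), with \(a\in e_x\) of coordinate \(c\). Let \(f_y\) be the other cycle edge at \(c\), give \(b\) coordinate \(c\), and give the other endpoint of \(e_y\) the other coordinate of \(f_y\). Say \(f_x=\{c-1,c\}\) and \(f_y=\{c,c+1\}\) (the other case is symmetric). Then:
\begin{itemize}[leftmargin=2em]
\item at coordinate \(c\) the rung \((c,x)(c,y)\) is the edge \(ab\);
\item at \(c+1\), layer \(x\) is in \(U_{1-i}\) while layer \(y\) is the endpoint of \(e_y\) in \(U_i\);
\item at \(c-1=c+2m\), layer \(x\) is in \(U_i\) and layer \(y\) is in \(U_{1-i}\);
\item at \(c+j\) with \(2\le j\le 2m-1\), the two alternating patterns are offset by one, so the two vertices lie in opposite sets.
\end{itemize}
Hence every rung between \(L_x\) and \(L_y\) is either \(ab\) or a pair between \(U_0\) and \(U_1\).

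\emph{Edges between \(S_0\) and \(S_1\).} Let \(y\) be a child of \(x\) with \(x\in S_0\) and \(y\in S_1\), or the reverse. Set \(f_y=f_x\). The matching of size two in (b) determines the bijection for \(e_y\), so that the rungs at coordinates \(c_x,c_x+1\) are matching edges. At every other coordinate the patterns of \(x\) and \(y\) are complementary, so the two vertices lie in opposite sets. Each nonroot vertex has a unique parent, so this assigns every \(f_x\) and bijection consistently.

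\textbf{Greedy embedding.} Next I embed the remaining \((2m-1)k\) vertices one at a time into their prescribed sets \(U_i\), choosing unused vertices. By the blueprint, every edge of \(F\) from a new vertex to an already placed vertex goes to a vertex of \(U_{1-i}\). Each such placed neighbor excludes at most \(\lambda\) candidates by (d). A vertex of \(F\) has fewer than \(v(F)\) neighbors, and fewer than \(v(F)\) vertices are already used. So at most \((v(F)-1)\lambda+v(F)-1<v(F)(\lambda+1)<|U_i|\) candidates are excluded, and a valid choice always exists. Distinct vertices of \(F\) receive distinct images, and the edges \(e_x\) have disjoint endpoints, so the result is an embedding with \(f_x\times\{x\}\) mapped onto \(e_x\).

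\textbf{Main obstacle.} The delicate step is the parity bookkeeping inside \(S_i\): checking that the shared-coordinate choice makes every rung other than the prescribed one cross between \(U_0\) and \(U_1\), including the wrap-around coordinate \(c-1\). The argument must also keep the orientations from (a) and (b) consistent along the tree. I would handle this by writing the side of \((c_x+t,x)\) explicitly as a function of \(t\) and \(i\), and comparing the two layers coordinatewise.
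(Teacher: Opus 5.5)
Your proposal is correct and follows essentially the same route as the paper. Your parity assignment (\(c_x+2j\mapsto U_{1-i}\), \(c_x+2j+1\mapsto U_i\), endpoints of \(e_x\) in \(U_i\)) is exactly the paper's partition \(P_0,P_1\) built from \(A_{f_x}\) and \(I_{f_x}\). The paper also propagates \(f_x\) down the rooted tree, taking the other cycle edge at the shared coordinate within \(S_i\) and setting \(f_y=f_x\) across the two parts, and then embeds the remaining vertices greedily via (d). Your coordinatewise rung check is an unpacked form of the paper's observation that \(A_{f_x}\cap A_{f_y}=\{c\}\) and \(I_{f_x}\cap I_{f_y}=\varnothing\).
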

\begin{proof}
For \(i=0,1\) and each \(xy\in E(T[S_i])\), choose an edge in
\(E(e_x,e_y)\). For every \(xy\in E(T)\) with \(x\in S_0\), \(y\in S_1\), choose
a matching of size two in \(G[e_x,e_y]\). Root \(T\) at a vertex
\(r\). Choose a cycle edge \(f_r\), and define \(\varphi_0\) initially
as a bijection from \(f_r\times\{r\}\) onto \(e_r\).

Process the vertices from the root toward the leaves. Suppose that
\(x\) is the parent of \(y\), and that a cycle edge \(f_x\) has already been mapped
onto \(e_x\). If \(x,y\in S_i\) for some \(i\in\{0,1\}\), let the chosen edge in
\(E(e_x,e_y)\) meet \(e_x\) at the image of a coordinate \(c\in f_x\).
Take for \(f_y\) the other cycle edge incident with \(c\), map
\((c,y)\) to the chosen endpoint of \(e_y\), and map the other endpoint
of \(f_y\times\{y\}\) to the other endpoint of \(e_y\). If \(x\in S_i\) and \(y\in S_{1-i}\) for some \(i\in\{0,1\}\), set \(f_y=f_x\). For each
\(a\in f_x\), map \((a,y)\) to the endpoint in \(e_y\) matched to
\(\varphi_0(a,x)\). Since every nonroot vertex has a
unique parent, this defines an injective map \(\varphi_0\) on
\(D=\bigcup_{x\in V(T)}(f_x\times\{x\})\).
For each \(x\in V(T)\), the edge \(f_x\times\{x\}\) maps to \(e_x\).
If \(xy\in E(T[S_i])\), the only edge of \(F[D]\) between \(L_x,L_y\)
is \(\{(c,x),(c,y)\}\), where \(\{c\}=f_x\cap f_y\); it maps to the chosen edge in \(E(e_x,e_y)\).
If \(xy\in E(T)\) joins \(S_0,S_1\), the two edges
\(\{(a,x),(a,y)\}\), \(a\in f_x=f_y\), map to the chosen matching.
Thus \(\varphi_0\) is an embedding of \(F[D]\).

For \(i=0,1\), define
\(P_i=\bigcup_{x\in S_i}(A_{f_x}\times\{x\}) \cup\bigcup_{x\in S_{1-i}}(I_{f_x}\times\{x\})\),
where \(I_f,A_f\) are as in Lemma~\ref{lem:decomp}.
The sets \(P_0,P_1\) specify which vertices will be mapped into
\(U_0,U_1\), respectively. They partition \(V(F)\), and
\(\varphi_0(D\cap P_i)\subseteq U_i\) for each \(i\in\{0,1\}\).
Moreover,
\begin{equation}\label{eq:template-bipartite}
 E(F[P_0])\cup E(F[P_1])\subseteq E(F[D]).
\end{equation}
The set \(I_f\) is independent, while \(C_{2m+1}[A_f]\) has only the
edge \(f\). If \(xy\in E(T[S_i])\), then \(f_x,f_y\) are distinct
cycle edges meeting in one coordinate \(c\), and
\(A_{f_x}\cap A_{f_y}=\{c\}\),
\(I_{f_x}\cap I_{f_y}=\varnothing\). If \(xy\) joins the two parts,
then \(f_x=f_y\), and equal coordinates in the two layers lie in
different sets \(P_0,P_1\), which proves \eqref{eq:template-bipartite}.

We extend the embedding to the vertices outside \(D\). Suppose that \(\varphi:S\to V(G)\) embeds \(F[S]\), extends
\(\varphi_0\), and satisfies \(\varphi(S\cap P_i)\subseteq U_i\) for
each \(i\in\{0,1\}\), where \(D\subseteq S\subsetneq V(F)\).
Choose \(i\in\{0,1\}\) and \(w\in P_i\setminus S\).
By \eqref{eq:template-bipartite},
\(\varphi(N_F(w)\cap S)\subseteq U_{1-i}\). Hence
\(\bigl|U_i\cap\bigcap_{q\in N_F(w)\cap S}N_G(\varphi(q))\bigr|
\ge |U_i|-v(F)\lambda>|S|\), by condition~\textup{(d)}.
There is therefore a vertex in this intersection outside \(\varphi(S)\);
choose it as \(\varphi(w)\). Every edge from \(w\) to \(S\) is now
preserved, and an edge from \(w\) to a vertex outside \(S\cup\{w\}\)
is checked when that vertex is mapped. Repetition gives an embedding
of \(F\) extending \(\varphi_0\).
\end{proof}

\begin{corollary}\label{cor:pinned}
Let \(G\) be a graph, \(\lambda\ge0\) and \(z\in V(T)\). Let \(A,B\subseteq V(G)\) be disjoint,
and let \(uv\in E(G)\) satisfy \(\{u,v\}\cap A=\varnothing\). For every
\(x\in V(T)\setminus\{z\}\), let \(e_x\in E(G[A])\), with these edges
pairwise disjoint in their vertices. Denote by \(C\) the set
\(N_G(u)\cap N_G(v)\cap A\). Suppose that
\begin{enumerate}[label=\textup{(\alph*)},leftmargin=2.7em,
                  itemsep=0pt,topsep=2pt]
\item for every \(xy\in E(T-z)\), \(E(e_x,e_y)\ne\varnothing\);
\item if \(xz\in E(T)\), then \(G[e_x,\{u,v\}]\) contains a matching
      of size two;
\item \(\min\{|B|,|C|\}>v(F)(\lambda+1)\);
\item every vertex of \(A\) has at most \(\lambda\) nonneighbors in
      \(B\), and every vertex of \(B\setminus\{u,v\}\) has at most
      \(\lambda\) nonneighbors in \(A\).
\end{enumerate}
Then there is an embedding of \(F\) into \(G\) in which one cycle edge of
\(L_z\) is mapped onto \(uv\), and one cycle edge of each other layer
\(L_x\) is mapped onto \(e_x\).
\end{corollary}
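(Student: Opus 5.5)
We derive the corollary from Lemma~\ref{lem:coordinates} by choosing the partition and the two host sets suitably. Set \(S_1=\{z\}\) and \(S_0=V(T)\setminus\{z\}\), and put \(e_z=uv\). The edges \(e_x\), \(x\in S_0\), lie in \(A\) and are pairwise disjoint. Since \(\{u,v\}\cap A=\varnothing\), the whole family \((e_x)_{x\in V(T)}\) has pairwise disjoint endpoint sets. Condition~(a) of Lemma~\ref{lem:coordinates} holds for \(S_0\) by hypothesis~(a), since \(T[S_0]=T-z\), and it holds vacuously for \(S_1\) because \(T[S_1]\) has no edges. Condition~(b) of the lemma is exactly hypothesis~(b), because the only tree edges between \(S_0\) and \(S_1\) are the edges \(xz\).

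Next we choose the sets. Take \(U_0=C\cup\bigcup_{x\ne z}e_x\subseteq A\), and take \(U_1=\{u,v\}\cup B'\), where \(B'=B\setminus(A\cup\{u,v\})\). Since \(A,B\) are disjoint, \(B'=B\setminus\{u,v\}\). The sets \(U_0,U_1\) are disjoint and condition~(c) holds. For the size bound in~(d), \(|U_0|\ge|C|>v(F)(\lambda+1)\). Also \(|U_1|\ge|B|\), since \(U_1\supseteq B\) once \(u,v\) are added. For the nonneighbor bound in~(d), every vertex of \(U_0\subseteq A\) has at most \(\lambda\) nonneighbors in \(B\). If it also misses \(u\) or \(v\), this bound can fail by up to two.

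This is the main obstacle: hypothesis~(d) says nothing about the adjacency of \(u,v\) to vertices of \(A\) outside \(C\), nor about the adjacency of \(u,v\) to \(U_0\). A blunt application of the lemma therefore fails. There are two ways to handle it. The first is to shrink \(U_0\) to \(C\) together with the endpoints of the \(e_x\), which does not resolve the endpoints of the \(e_x\). The cleaner route is to rerun the greedy extension step of Lemma~\ref{lem:coordinates} directly rather than cite it verbatim.

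Concretely, we build \(\varphi_0\) on \(D\) exactly as in that proof, using \(S_1=\{z\}\). The vertices of \(P_0\setminus D\) are then embedded into \(C\), and the vertices of \(P_1\setminus D\) into \(B\setminus\{u,v\}\). When a vertex \(w\in P_0\setminus D\) is placed, its already embedded neighbors lie in \(U_1\). Those in \(B\setminus\{u,v\}\) exclude at most \(\lambda\) vertices of \(A\) each, and \(u,v\) exclude nothing inside \(C\) by the definition of \(C\). Hence at least \(|C|-v(F)\lambda>|S|\) candidates remain. When a vertex \(w\in P_1\setminus D\) is placed, its embedded neighbors lie in \(U_0\subseteq A\), each excluding at most \(\lambda\) vertices of \(B\). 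Hence at least \(|B|-2-v(F)\lambda\) candidates remain in \(B\setminus\{u,v\}\). Condition~(c) keeps this larger than the number of used vertices, possibly after absorbing the constant \(2\) by noting that \(v(F)\ge6\) gives enough slack. Relation~\eqref{eq:template-bipartite} guarantees that no edges inside a part need checking beyond those already embedded in \(D\). The resulting embedding maps a cycle edge of \(L_z\) onto \(uv\) and a cycle edge of each other layer \(L_x\) onto \(e_x\), as required.
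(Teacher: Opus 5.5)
Your proposal is correct and is essentially the paper's proof: after setting aside the direct citation of Lemma~\ref{lem:coordinates}, you rerun its construction of \(D,P_0,P_1\) with \(S_0=V(T)\setminus\{z\}\), \(S_1=\{z\}\), \(e_z=uv\), and then greedily send \(P_0\setminus D\) into \(C\), where \(u,v\) impose no constraint, and \(P_1\setminus D\) into \(B\). The one loose point is your final count: the slack does not come from \(v(F)\ge6\), but from the fact that \(u,v\) are already used images lying outside \(B\setminus\{u,v\}\), so at least \(|B|-v(F)\lambda-|S|>0\) candidates remain, exactly as in the paper.
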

\begin{proof}
Root \(T\) at \(z\), set \(e_z=uv\), and repeat the construction of
\(D,P_0,P_1\) in the proof of Lemma~\ref{lem:coordinates}, with
\(S_0=V(T)\setminus\{z\}\) and \(S_1=\{z\}\). Conditions
\textup{(a)--(b)} give the required embedding of \(F[D]\), and
\eqref{eq:template-bipartite} still holds. The two vertices of
\(f_z\times\{z\}\) are mapped to \(u,v\), while every other vertex
of \(D\cap P_0\) is mapped into \(A\).

Extend the embedding by sending \(P_0\setminus D\) into \(C\) and
\(P_1\setminus D\) into \(B\). Suppose that \(\varphi\) is the embedding already defined on a set
\(S\supseteq D\). If \(w\in P_0\setminus S\), then
\(\varphi(N_F(w)\cap S)\subseteq(B\setminus\{u,v\})\cup\{u,v\}\).
Every vertex of \(C\) is adjacent to \(u,v\), and each remaining image
has at most \(\lambda\) nonneighbors in \(C\subseteq A\).
Thus at least \(|C|-v(F)\lambda-|S|>0\) vertices of \(C\setminus\varphi(S)\)
are adjacent to all images of \(N_F(w)\cap S\).
If \(w\in P_1\setminus S\), then
\(\varphi(N_F(w)\cap S)\subseteq A\), and at least
\(|B|-v(F)\lambda-|S|>0\) vertices of \(B\setminus\varphi(S)\) are
adjacent to all these images. Choosing such an image for \(w\) at
each step extends the embedding to \(F\).
\end{proof}

\subsection{Graphs containing a core}

For a positive integer \(\ell\), a triple of disjoint vertex sets
\((R,U,V)\) is a \emph{core with parameter \(\ell\)} if
\(|R|=\tau(T)\), \(|U|=|V|=\ell\), \(U,V\) are independent and
complete to each other, and \(R\) is complete to \(U\cup V\).

\begin{lemma}\label{lem:core}
There are positive integers \(t_0,\ell\), depending only on \(F\),
such that the following holds. Let \(G\) be a graph on \(n\) vertices
that does not contain \(F\) as a subgraph, with \(e(G)\ge\Psi(n)\), and suppose that \(G\) contains
a core with parameter \(\ell\). Then either \(G\in\mathcal G(n)\), or
some \(W\subseteq V(G)\) of size \(2t_0\) satisfies
\(e(G)-e(G-W)\le
t_0(n-t_0)+(k-1)t_0-\binom{k}{2}-1\).
\end{lemma}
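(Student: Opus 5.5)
We use the core to force many vertices of large "two-sided" degree, and then either locate a small set \(W\) whose deletion loses few edges, or show that \(G\) has the join structure of \(\mathcal G(n)\). Let \((R,U,V)\) be the core and fix \(t_0\) and \(\ell\) with \(\ell\gg t_0\gg v(F)\).

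\textbf{Step 1 (the core as a template).} Since \(\tau(T)\) vertices of \(R\) cover all tree edges, we can place \(R\) on a minimum vertex cover of \(T\). Then \(U\cup V\) hosts a complete bipartite graph, and together with \(R\) it supports a partial copy of \(F\). The remaining layers correspond to the independent side of the cover; each needs only a cycle edge attached appropriately, in the spirit of Lemma~\ref{lem:coordinates} and Corollary~\ref{cor:pinned}.

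From this we derive forbidden configurations. Roughly, if many vertices outside the core send edges into both \(U\) and \(V\), or into \(U\) or \(V\) while also being adjacent to vertices of \(R\), then \(F\) embeds. Hence the neighbourhoods of typical vertices are almost one-sided with respect to the core.

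\textbf{Step 2 (choosing \(W\) when the structure fails).} Partition \(V(G)\) into \(X\), the vertices essentially complete to \(U\), and \(Y\), those essentially complete to \(V\), with a bounded exceptional set. Since \(e(G)\ge\Psi(n)\), the bipartite graph between \(X\) and \(Y\) must be nearly complete. Inside one side, say \(X\), the graph \(G[X]\) must avoid \(\M(F)\), since otherwise Lemma~\ref{lem:coordinates} together with the density between sides embeds \(F\). By Lemma~\ref{lem:decomp}, \(e(G[X])\le g(|X|)\le (k-1)|X|\).

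If \(G[Y]\) is not independent, or if some vertex misses many vertices across the partition, pick \(W\) consisting of \(t_0\) vertices from \(X\) and \(t_0\) from \(Y\), chosen to include the deficient vertices. Their total degree is at most \(t_0(n-t_0)\) plus the \(X\)-internal contribution, which is at most about \((k-1)t_0\), minus the deficiency. Choosing the vertices of \(X\) of smallest internal degree, averaging gives an internal contribution of at most \((k-1)t_0\) plus a bounded error. We then need a deficiency of at least \(\binom{k}{2}+1\) plus that error to conclude. This gives the bound \(e(G)-e(G-W)\le t_0(n-t_0)+(k-1)t_0-\binom{k}{2}-1\).

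\textbf{Step 3 (the extremal case).} If no such deficiency exists, then \(Y\) is independent and complete to \(X\), and \(G[X]\) contains no member of \(\M(F)\). Then \(e(G)\le |X||Y|+g(|X|)\le\Psi(n)\). Equality forces \(G\in\mathcal G(n)\), since \(G[X]\) must lie in \(\EX(|X|,\M(F))\) and \((|X|,|Y|)\) must attain \(\Psi(n)\).

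\textbf{Main obstacle.} The hard part is the accounting in Step 2. Deficiencies may be spread thinly, for example a few edges inside \(Y\), and the internal edges of \(X\) at the chosen vertices may exceed the average. To handle this, I would first pick the \(t_0\) vertices of \(X\) minimizing internal degree, using \(g(a)\le(k-1)a\) for the average bound. I would then exploit the fact that any single edge inside \(Y\), or any missing cross edge, costs a full unit when its endpoints are placed in \(W\). The slack of \(\binom{k}{2}+1\) is absorbed by taking \(t_0\) large enough that the accumulated gain from the Erd\H{o}s--S\'os slack or from the deficiencies exceeds it.
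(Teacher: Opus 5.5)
There is a genuine gap, and it sits where you yourself locate the ``main obstacle''.

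\textbf{The exceptional set is never accounted for.} This set contains \(R\) and every other vertex with many neighbours in both \(U\) and \(V\). Such vertices can be adjacent to almost everything, so they cannot be dropped.
\begin{itemize}
\item An exceptional vertex adjacent to all of \(W\) adds about \(t_0\) to \(e(G)-e(G-W)\) beyond \(t_0(n-t_0)\). This comes on top of the \((k-1)t_0\) you already spend on edges inside \(X\).
\item Your Step~3 bound \(e(G)\le|X||Y|+g(|X|)\) silently assumes \(V(G)=X\cup Y\).
\item Absorbing these vertices into one side needs facts you neither state nor prove. First, \(R\) can be enlarged by two-sided vertices only up to size \(k-1\). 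Second, no vertex of \(R\) is adjacent to a vertex of \(X\) or \(Y\) that has a neighbour on its own side. This second fact is what yields \(e(G[X])+e(R,X)\le(k-1)|X|\).
\item Your Step~1 heuristic, that vertices adjacent to \(R\) and to one of \(U,V\) force \(F\), is false. In \(K_{k-1}\vee\ind{a-k+1}\vee\ind{b}\) with \(R\subseteq K_{k-1}\), \(U\subseteq\ind{a-k+1}\), \(V\subseteq\ind{b}\), every vertex of \(\ind{a-k+1}\) is complete to \(R\cup V\). Yet this graph is \(F\)-free.
\end{itemize}

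\textbf{The deletion accounting fails as described.}
\begin{itemize}
\item Averaging bounds the internal degree sum of the \(t_0\) lowest-degree vertices of \(X\) only by \(2e(G[X])t_0/|X|\le2(k-1)t_0\), not by \((k-1)t_0\).
\item Putting both endpoints of an edge inside \(Y\) into \(W\) increases \(e(G)-e(G-W)\); it does not decrease it.
\item A deficiency smaller than \(\binom{k}{2}+1\), for example a single missing edge between \(X\) and \(Y\), cannot be captured by any \(W\). It also violates the hypothesis of your Step~3. So the dichotomy between Steps~2 and~3 leaves a middle case uncovered.
\end{itemize}

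\textbf{The missing idea is to choose \(W\) inside the core rather than in \(X\cup Y\).} This is what the paper does.
\begin{itemize}
\item It first enlarges \(R\) greedily by vertices with at least \(|U|/c\) neighbours in both \(U\) and \(V\), shrinking \(U,V\) each time. This stops at \(r=|R|\le k-1\), because \(|R|=k\) would host the colour class of size \(k\).
\item It shows \(\nu(G[U,X]),\nu(G[V,Y])\le k-r-1\). It then deletes bounded vertex covers together with their neighbourhoods, so the remaining \(U_c,V_c\) send no edges to \(X,Y\) respectively.
\item It takes random \(t_0\)-sets \(U'\subseteq U_c\), \(V'\subseteq V_c\). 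Every vertex of \(W=U'\cup V'\) has no neighbour on its own side and is complete to \(R\) and to the opposite core set. This gives the exact identity \(e(G)-e(G-W)=t_0(n-t_0)+(k-1)t_0-\rho\), with \(\rho=(k-1-r)t_0+\sum_{z}(t_0-d_W(z))\).
\item The expectation of \(\rho\) exceeds \(\binom{k}{2}\) unless \(r=k-1\) and no vertex is two-sided deficient.
\item In that remaining case, the residual deficiency \(\mu\le\binom{k}{2}\) is handled by a global count \(e(G)\le\Psi(n)-\mu\), not by any choice of \(W\). This count uses that \(R\) has no neighbours among vertices having an edge on their own side.
\item Finally, Lemma~\ref{lem:coordinates} rules out edges on both sides, which gives \(G\in\mathcal G(n)\).
\end{itemize}
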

\begin{proof}
Fix a core \((R,U,V)\) with parameter \(\ell\). We complete the proof in three steps.

\smallskip
\noindent\emph{Step 1. Enlarge \(R\).}\par
Choose positive constants and positive integers \(c,t_0\) with
\[
 1\gg\eta\gg c^{-1}\gg t_0^{-1}>0,
 \qquad s_0=ct_0,\qquad \ell=(2c)^ks_0.
\]
In particular, \(t_0>2v(F)+\binom{k}{2}\) and
\(\eta t_0/2>\binom{k}{2}\).
Whenever a vertex outside \(R\cup U\cup V\) has at least
\(|U|/c\) neighbors in both \(U,V\), add it to \(R\) and
replace both sets by subsets of its neighbors of common size
\(\lfloor|U|/c\rfloor\). After each step, \(U,V\) remain
independent and complete to each other, and \(R\) remains
complete to \(U\cup V\). Each step retains at least a
\(1/(2c)\) fraction
of each set. Even after \(k\) steps their sizes are at least
\(s_0\). If \(|R|\) reached \(k\), the three color classes of \(F\), of sizes
\(k,mk,mk\), could be mapped into \(R,U,V\), respectively.
The required edges are present because these sets are pairwise complete.
The procedure stops with \(\tau(T)\le r=|R|\le k-1\),
\(|U|=|V|=s\ge s_0\), and every vertex outside
\(R\cup U\cup V\) having fewer than \(s/c\) neighbors in
at least one of \(U,V\).

Let \(X\) consist of the vertices \(x\) outside
\(R\cup U\cup V\) with \(d_U(x)<s/c\) and
\(d_V(x)>(1-\eta)s\); define \(Y\) by interchanging \(U,V\), and let
\(Z=V(G)\setminus(R\cup U\cup V\cup X\cup Y)\).
For \(z\in Z\), the stopping condition and \(z\notin X\cup Y\)
give \(d_U(z)+d_V(z)\le(1-\eta+1/c)s\).

We claim that
\(\nu(G[U,X]),\nu(G[V,Y])\le k-r-1\).
Since \(r\ge\tau(T)\), choose a vertex cover
\(S\subseteq V(T)\) of size \(r\).
Suppose there is a matching of size \(k-r\) in \(G[U,X]\),
and assign its edges to the vertices of \(V(T)\setminus S\).
For \(x\in S\), use the edge \(\{q_x,u_x\}\), with distinct
\(q_x\in R\) and distinct unused \(u_x\in U\).
For a tree edge from \(x\in S\) to \(y\notin S\), join
\(q_x\) to the endpoint in \(U\) of the edge assigned to \(y\).
For each \(xy\in E(T[S])\), use either \(\{q_x,u_y\}\) or
\(\{q_y,u_x\}\). All connector edges exist because \(R\) is complete
to \(U\), and \(V(T)\setminus S\) is independent. Denote the
resulting edge blowup of \(T\) by \(D\).
The vertices of this blowup in \(R\cup U\) are complete to \(V\),
and its at most \(k\) vertices in \(X\) have at least
\((1-k\eta)s\ge v(F)\) common neighbors in \(V\). Choose \(v(F)\) of
these common neighbors and \(v(F)\) vertices of \(U\) outside the blowup.
Together with \(D\), the selected vertices contain
\((D\cup\ind{v(F)})\vee\ind{v(F)}\), and hence \(F\), by
Lemma~\ref{lem:decomp} and the definition of feasibility. This is a
contradiction.
Interchanging \((U,X)\) and \((V,Y)\) gives the other bound.

\smallskip
\noindent\emph{Step 2. Delete \(2t_0\) vertices.}\par
We count the edges removed by a suitable set of \(2t_0\) vertices.
By K\H{o}nig's theorem, choose vertex covers \(C_U,C_V\) of
\(G[U,X],G[V,Y]\), respectively, with \(|C_U|,|C_V|\le k\).
Set \(U_c=U\setminus\bigl(C_U\cup\bigcup_{x\in C_U\cap X}N(x)\bigr)\)
and \(V_c=V\setminus\bigl(C_V\cup\bigcup_{y\in C_V\cap Y}N(y)\bigr)\).
Then \(E(U_c,X)=E(V_c,Y)=\varnothing\), and
\(\min\{|U_c|,|V_c|\}\ge s-k-ks/c\ge(1-2k/c)s\), since \(s\ge c\). Choose subsets \(U'\subseteq U_c\), \(V'\subseteq V_c\), each of size
\(t_0\), independently and uniformly,
and let \(W=U'\cup V'\). For \(z\in Z\),
\[
 \mathbb E d_W(z)
 =t_0\Bigl(\frac{d_{U_c}(z)}{|U_c|}
          +\frac{d_{V_c}(z)}{|V_c|}\Bigr)
 \le t_0\frac{1-\eta+1/c}{1-2k/c}
 \le(1-\eta/2)t_0,
\]
 by the hierarchy \(\eta\gg c^{-1}\).
Let \(\rho=(k-1-r)t_0+
\sum_{z\in X\cup Y\cup Z}(t_0-d_W(z))\).
For every \(x\in X\), \(d_W(x)=d_{V'}(x)\le t_0\); for every
\(y\in Y\), \(d_W(y)=d_{U'}(y)\le t_0\). Therefore
\(\mathbb E\rho\ge(k-1-r)t_0+(\eta/2)t_0|Z|\).
Counting the edges in \(E(U,V)\) incident with \(W\),
in \(E(R,W)\), and in \(E(W,X\cup Y\cup Z)\) gives
\[
 e(G)-e(G-W)=t_0(2s-t_0)+2rt_0+
 \sum_{z\in X\cup Y\cup Z}d_W(z)
 =t_0(n-t_0)+(k-1)t_0-\rho,
\]
where \(n=r+2s+|X\cup Y\cup Z|\).
If \(r<k-1\) or \(Z\ne\varnothing\), then
\(\mathbb E\rho>\binom{k}{2}\). Since \(\rho\) is integer,
some choice satisfies \(\rho\ge\binom{k}{2}+1\), proving
the inequality in Lemma~\ref{lem:core}. The remaining case has
\(r=k-1\) and \(Z=\varnothing\). Assume that no set \(W\) of size \(2t_0\)
satisfies the inequality in Lemma~\ref{lem:core}. Then every choice of \(U',V'\)
above gives \(0\le\rho\le\binom{k}{2}\); fix one such choice.

\smallskip
\noindent\emph{Step 3. Determine the equality case.}\par
Because \(U'\) has no edges to \(X\), \(V'\) has no edges
to \(Y\), and \(U,V\) are complete to each other, we have
\(\rho=\overline e(X\cup(U\setminus U'),V')
+\overline e(Y\cup(V\setminus V'),U')\).
Replace \((U,V,X,Y)\) simultaneously by
\((U',V',X\cup(U\setminus U'),Y\cup(V\setminus V'))\).
The sets \(R,U,V,X,Y\) now partition \(V(G)\), with \(|R|=k-1\) and
\(|U|=|V|=t_0\ge2v(F)+\binom{k}{2}+1\).
The sets \(U,V\) are independent and complete to each other,
\(R\) is complete to \(U\cup V\), and
\(E(U,X)=E(V,Y)=\varnothing\).
Thus
\(\mu=\overline e(X,V)+\overline e(Y,U)=\rho\le\binom{k}{2}\).

We prove \(e(G)\le\Psi(n)-\mu\) by estimating the edges inside
\(X,Y\) and the edges from these sets to \(R\). Define
\(X^+=\{x\in X:d_X(x)>0\}\) and
\(Y^+=\{y\in Y:d_Y(y)>0\}\). There is no edge from \(R\) to
\(X^+\). Otherwise, take \(xx'\in E(G[X])\) and
\(qx\in E(G)\) with \(q\in R\). Let \(z\) be a leaf of
\(T\) with neighbor \(p\), and take a blowup \(D\) of \(T-z\).
The replacement edges of \(D\) form a perfect matching,
so both bipartition classes have \(k-1\) vertices.
Embed these classes in \(R,U\), choosing the class containing
an endpoint of \(e_p\) for \(R\) and mapping that endpoint to \(q\).
Every edge of \(D\) is present because \(R\) is complete to \(U\).
The edge \(xx'\) and the edge \(qx\) extend \(D\) to a blowup
of \(T\).
All vertices of this blowup in \(R\cup U\) are complete to \(V\), and
the total number of missing edges from \(x,x'\) to \(V\) is
at most \(\mu\). The vertices of the blowup therefore have at least
\(|V|-\mu\ge v(F)\) common neighbors in \(V\), while at least
\(v(F)\) vertices of \(U\) lie outside the blowup.
The extended blowup, \(v(F)\) of its common neighbors in \(V\), and
\(v(F)\) vertices of \(U\) outside the blowup contain \(F\), by
Lemma~\ref{lem:decomp} and feasibility. This is a contradiction.
Interchanging \((U,X)\) and \((V,Y)\) gives \(e(R,Y^+)=0\).

Similarly, if \(M\in\M(F)\) lay in \(G[X^+]\),
its at most \(v(F)\) vertices would have at least \(|V|-\mu\)
common neighbors in \(V\); \(M\), \(v(F)\) of these common neighbors,
and \(v(F)\) vertices of \(U\) would together contain a copy of \(F\)
by feasibility. Thus neither
\(G[X^+]\) nor \(G[Y^+]\) contains a member of \(\M(F)\) as a subgraph.
By Lemma~\ref{lem:decomp} and the absence of edges from \(R\)
to \(X^+\), we have
\(e(G[X])+e(R,X)\le g(|X^+|)+(k-1)|X\setminus X^+|
\le(k-1)|X|\), with the analogous bound for \(Y\).
Let \(a=|U|+|X|\), \(b=|V|+|Y|\).
There are at most \(ab-\mu\) edges between \(U\cup X\) and \(V\cup Y\),
exactly \((k-1)(|U|+|V|)\) edges from \(R\) to \(U\cup V\),
and at most \(\binom{k-1}{2}\) edges inside \(R\). It follows that
\(e(G)\le ab+(k-1)(a+b)+\binom{k-1}{2}-\mu
\le\Psi(n)-\mu\). For the second inequality, the graph
\((K_{k-1}\vee\ind{a})\vee\ind{b}\) has
\(ab+(k-1)(a+b)+\binom{k-1}{2}\) edges.
The graph \(K_{k-1}\vee\ind{a}\) has matching number at most
\(k-1\), so it does not contain any member of \(\M(F)\) as a subgraph, by
Lemma~\ref{lem:decomp}. Thus \(e((K_{k-1}\vee\ind{a})\vee\ind{b})\le\Psi(n)\).

Since \(e(G)\ge\Psi(n)\), we have \(e(G)=\Psi(n)\), \(\mu=0\),
and equality in all bounds used to obtain
\(e(G)\le\Psi(n)-\mu\). In particular,
\(e(G[R])=\binom{k-1}{2},\quad e(U\cup X,V\cup Y)=ab\),
and equality holds in
\(e(G[X])+e(R,X)\le(k-1)|X|, \quad e(G[Y])+e(R,Y)\le(k-1)|Y|\).
Thus \(R\) is a clique, \(U\cup X\) is complete to
\(V\cup Y\).

Equality in the bounds for \(e(G[X])+e(R,X)\) gives
\(e(G[X^+])=g(|X^+|)=(k-1)|X^+|\), and \(R\) is complete to
\(X\setminus X^+\). Similarly,
\(e(G[Y^+])=g(|Y^+|)=(k-1)|Y^+|\), and \(R\) is complete to
\(Y\setminus Y^+\).
If \(X^+\ne\varnothing\), then \(\nu(G[X^+])\ge k-1\).
Indeed, for \(k\ge3\),
\(e(G[X^+])=(k-1)|X^+|>(k-2)|X^+|\), so the
Erd\H{o}s--Gallai theorem~\cite{ErdosGallai} gives a copy of
\(P_{2k-2}\) in \(G[X^+]\), containing a matching of size \(k-1\).
For \(k=2\), take any edge. The same argument applies to \(Y^+\).
If \(X^+,Y^+\ne\varnothing\), then \(|T^0|,|T^1|\le k-1\),
so choose disjoint edges in \(G[X]\) for the vertices of \(T^0\)
and in \(G[Y]\) for the vertices of \(T^1\).
Apply Lemma~\ref{lem:coordinates} with \(S_0=T^0\), \(S_1=T^1\),
\(U_0=U\cup X\), \(U_1=V\cup Y\), and \(\lambda=0\).
The sets \(S_0,S_1\) are independent, so condition~\textup{(a)} has no
edges to check. Since \(U_0\) is complete to \(U_1\),
\(G[e_x,e_y]\) is complete bipartite whenever \(xy\in E(T)\),
which verifies condition~\textup{(b)}. Conditions~\textup{(c)--(d)}
follow from the locations of the chosen edges and
\(|U_0|,|U_1|\ge t_0>v(F)\). By
Lemma~\ref{lem:coordinates}, \(G\) contains a copy of \(F\), a
contradiction. Interchanging
\((U,X,a)\) and \((V,Y,b)\) if necessary, assume \(Y^+=\varnothing\).

Since \(Y^+=\varnothing\) and equality holds in the preceding bounds,
\(V\cup Y\) is independent and complete to \(R\cup U\cup X\).
Set \(J=G[R\cup U\cup X]\).
If \(M\subseteq J\) for some \(M\in\M(F)\), choose \(v(F)\)
vertices of \(V\) and \(v(F)\) vertices of \(U\setminus V(M)\).
The sets \(V\) and \(U\setminus V(M)\) each have at least
\(v(F)\) vertices, since \(v(M)\le v(F)\) and \(|U|,|V|\ge2v(F)\).
Since \(V\) is complete to
\(V(M)\cup(U\setminus V(M))\), these vertices would contain a copy of
\(F\) by feasibility.
Thus \(J\) does not contain any member of \(\M(F)\) as a subgraph.
Since \(G=J\vee\ind{b}\),
\(e(G)=bv(J)+e(J)\le bv(J)+g(v(J))\le\Psi(n)\).
Equality gives \(e(J)=g(v(J))\) and
\(bv(J)+g(v(J))=\Psi(n)\), so \(G\in\mathcal G(n)\).
\end{proof}

\subsection{Removing a bounded set of vertices}

\begin{lemma}\label{lem:cleanpair}
For every fixed \(c,\gamma,\eta>0\), there are \(\varepsilon>0\) and an integer
\(n_0\) such that the following holds for every \(n\ge n_0\). Let
\(G\) be a graph on \(n\) vertices that does not contain \(F\) as a
subgraph, and let \(A,B\subseteq V(G)\) be disjoint sets satisfying
\begin{enumerate}[label=\textup{(\alph*)},leftmargin=2.7em,
                  itemsep=0pt,topsep=2pt]
\item \(|A|,|B|\ge cn\) and \(\overline e(A,B)\le\gamma n\);
\item \(\Delta(G[A])+\Delta(G[B])\le\varepsilon n\);
\item \(\max_{x\in A}\overline d_B(x),
       \max_{y\in B}\overline d_A(y)\le\varepsilon n\).
\end{enumerate}
Then \(\min\{e(G[A]),e(G[B])\}\le\eta n\), and neither
\(G[A]\) nor \(G[B]\) contains a member of \(\M(F)\) as a subgraph.
\end{lemma}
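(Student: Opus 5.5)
The plan is to prove the two assertions separately. The second (no member of \(\M(F)\) in \(G[A]\) or \(G[B]\)) is a direct feasibility argument. The first (\(\min\{e(G[A]),e(G[B])\}\le\eta n\)) goes through Lemma~\ref{lem:coordinates}, whose extension step we rerun with \(\lambda\) replaced by \(\varepsilon n\). Throughout, \(\varepsilon\) is chosen small in terms of \(c,\gamma,\eta\) and \(F\), and \(n_0\) large.

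\emph{No member of \(\M(F)\).} Suppose \(M\subseteq G[A]\) for some \(M\in\M(F)\). Then \(v(M)\le v(F)\).
\begin{itemize}[leftmargin=2em,itemsep=0pt]
\item By (c), the vertices of \(M\) have at least \(cn-v(F)\varepsilon n\) common neighbours in \(B\).
\item By (b), \(\Delta(G[B])\le\varepsilon n\). So we can greedily pick an independent set \(I_B\) of \(v(F)\) such common neighbours, since each choice excludes at most \(\varepsilon n+1\) candidates.
\item By (c) again, \(I_B\) has at least \(cn-2v(F)\varepsilon n\) common neighbours in \(A\setminus V(M)\). Using \(\Delta(G[A])\le\varepsilon n\), we greedily pick an independent set \(I_A\) of \(v(F)\) of them.
\end{itemize}
Then \((M\cup I_A)\vee I_B\subseteq G\), so by the definition of feasibility \(G\supseteq F\), a contradiction. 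The argument for \(B\) is symmetric. Hypothesis (a) is used only through \(|A|,|B|\ge cn\).

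\emph{Edge bound.} Suppose \(e(G[A]),e(G[B])>\eta n\). Since \(\Delta\le\varepsilon n\), each of \(G[A]\) and \(G[B]\) has matching number at least \(\eta/(2\varepsilon)\), which is large. We aim to apply the scheme of Lemma~\ref{lem:coordinates} with \(S_0=T^0\), \(S_1=T^1\), \(U_0=A\), \(U_1=B\). Since \(T^0,T^1\) are independent, condition (a) there is vacuous. The extension step in that proof only needs \(|U_i|-v(F)\cdot(\text{max nonneighbours})>v(F)\). Here this reads \(cn-v(F)\varepsilon n>v(F)\), which holds by (c). So it suffices to choose pairwise disjoint edges \(e_x\in E(G[A])\) for \(x\in T^0\) and \(e_y\in E(G[B])\) for \(y\in T^1\), such that \(G[e_x,e_y]\) has a matching of size two for every \(xy\in E(T)\). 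We then rerun the extension argument verbatim, and \(F\subseteq G\) gives the contradiction.

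\emph{Main obstacle: choosing the \(e_x\).} I would process \(T\) from a root and pick the edges one at a time. For a fixed parent edge \(uv\subseteq A\), the \(2\times2\) graph \(G[\{u,v\},\{w,w'\}]\) lacks a matching of size two only if a nonedge meets each of its two perfect matchings. This forces \(w\) or \(w'\) into \(N_B(u)\triangle N_B(v)\) or \(\overline N_B(u)\cap\overline N_B(v)\), a set of at most \(2\varepsilon n\) vertices.

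The danger is that the \(>\eta n\) edges of \(G[B]\) concentrate on few high-degree vertices lying in this bad set. Naive averaging over random edges fails: the bad-pair count \(\sum_{ab\notin E}d_A(a)d_B(b)\) is only bounded by \(O(\varepsilon^2n^2)\cdot e\), which beats \(e(G[A])e(G[B])\) by just a factor \(n\). The example of stars with few centres shows that the flexibility of a size-two matching, rather than completeness, is essential. There the centres may be mutually nonadjacent, but centre–leaf cross edges supply the matching.

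My first attempt would split each \(G[X]\), \(X\in\{A,B\}\), into the edges incident to the set \(H_X\) of vertices of degree at least \(\sqrt\varepsilon\, n\) in \(G[X]\), where \(|H_X|=O(\varepsilon^{-1/2})\) is bounded, and the remaining low-degree edges.
\begin{itemize}[leftmargin=2em,itemsep=0pt]
\item If both low-degree parts have \(\ge\eta n/2\) edges, then edges incident to any set of size \(2\varepsilon n\) number at most \(2\varepsilon n\cdot\sqrt\varepsilon\, n\). This still fails in general, so I would further use \(\overline e(A,B)\le\gamma n\). Then all but \(O(\gamma/\delta)\) vertices have at most \(\delta\) nonneighbours across, and restricting to edges spanned by such vertices leaves the bad set of bounded size, hence only \(O(\varepsilon n)\) bad edges.
\item If one side is dominated by edges at a bounded hub set, I would use a hub \(h\) together with its many leaves. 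Two leaves of \(h\) that are typical (few cross nonneighbours) yield edges \(hw\) meeting any typical edge on the other side in a size-two matching. This works because \(h\) itself has at most \(\varepsilon n\) nonneighbours across, and we may pick the leaf avoiding them.
\end{itemize}
Making this case analysis uniform over all \(k\) tree vertices, while keeping the chosen edges disjoint, is where I expect the real work to lie.
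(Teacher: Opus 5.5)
Your proof that neither $G[A]$ nor $G[B]$ contains a member of $\M(F)$ is correct and is the paper's argument. Making $I_A,I_B$ independent is unnecessary, since $(M\cup\ind{v(F)})\vee\ind{v(F)}$ only has to appear as an ordinary subgraph, but it is harmless.

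The edge bound, however, is not proved. The hub/low-degree case analysis is left unfinished. Its first bullet also rests on a miscount: $\overline e(A,B)\le\gamma n$ bounds the number of vertices with more than $\delta$ cross nonneighbours only by $\gamma n/\delta$, which is linear in $n$, not $O(\gamma/\delta)$. So restricting to edges spanned by ``typical'' vertices may remove all of $E(G[A])$, and the bad set is not of bounded size.

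The missing idea is to combine two facts you already have.

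\emph{First, use your own criterion at full strength rather than weakening it.} Having a nonedge in each of the two perfect matchings of $G[\{u,v\},\{w,w'\}]$ is, by Hall, the same as saying that some endpoint of one edge is nonadjacent to both endpoints of the other. That is, $e'\subseteq B\setminus N(x)$ for some $x\in e$, or $e\subseteq A\setminus N(y)$ for some $y\in e'$. So bad pairs are controlled by edges lying \emph{inside} a single non-neighbourhood. They are not controlled by edges merely incident to the set $B\setminus(N(u)\cap N(v))$, which is the weaker condition you reduced to.

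\emph{Second, feed the first assertion back in.} Since $G[A]$ and $G[B]$ contain no member of $\M(F)$, Lemma~\ref{lem:decomp} gives $e(J)\le(k-1)v(J)$ for every induced subgraph $J$ of either. In particular $e(G[B\setminus N(x)])\le(k-1)\overline d_B(x)$. Hence the number of bad pairs is at most $(k-1)\bigl(\sum_{x\in A}d_A(x)\overline d_B(x)+\sum_{y\in B}d_B(y)\overline d_A(y)\bigr)\le(k-1)\gamma\varepsilon n^2$. This is exactly where $\overline e(A,B)\le\gamma n$ is used.

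Compared with $e(G[A])e(G[B])>\eta^2n^2$, this is negligible once $\varepsilon\ll\eta^2/(k\gamma)$. So the random choice you dismissed actually works:
\begin{itemize}
\item pick each $e_x$ independently and uniformly from $E(G[A])$ or $E(G[B])$, according to the side of $x$;
\item two edges on the same side meet with probability at most $2\varepsilon/\eta$;
\item a union bound over the at most $k^2$ pairs then gives a valid choice.
\end{itemize}
This also removes your concentration worry. A high-degree vertex in $B\setminus(N(u)\cap N(v))$ only matters through edges lying entirely inside one non-neighbourhood, and there are linearly few of those. No hub decomposition is needed, and Lemma~\ref{lem:coordinates} with $\lambda=\varepsilon n$ then finishes as you planned.
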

\begin{proof}
Choose the constants in the order
\[
 \min\{1,c,\gamma^{-1},\eta\}\gg\varepsilon\gg n_0^{-1}>0.
\]
Suppose first that \(G[A]\) contains \(M\in\M(F)\) as a subgraph.
By \textup{(a), (c)},
\(\bigl|B\cap\bigcap_{x\in V(M)}N(x)\bigr|
\ge cn-v(F)\varepsilon n>2v(F)\).
Choose \(B'\) of size \(v(F)\) in this intersection. Again by
\textup{(a), (c)},
\(\bigl|(A\setminus V(M))\cap\bigcap_{y\in B'}N(y)\bigr|
\ge cn-v(F)\varepsilon n-v(F)>v(F)\).
Choose \(A'\) of size \(v(F)\) in the latter intersection.
Then \(G[V(M)\cup A'\cup B']\) contains
\((M\cup\ind{v(F)})\vee\ind{v(F)}\), and hence \(F\), a contradiction. The same argument applies to \(B\), so every induced
subgraph \(J\) of \(G[A]\) or \(G[B]\) satisfies
\begin{equation}\label{eq:induced-linear}
 e(J)\le(k-1)v(J),
\end{equation}
by Lemma~\ref{lem:decomp}.

Suppose, for a contradiction, that \(e(G[A])>\eta n\) and
\(e(G[B])>\eta n\).
Define \(\mathcal B=\{(e,e')\in E(G[A])\times E(G[B]):
\nu(G[e,e'])<2\}\).
By Hall's theorem, for each \((e,e')\in\mathcal B\), one endpoint of
\(e\) is nonadjacent to both endpoints of \(e'\), or one endpoint of
\(e'\) is nonadjacent to both endpoints of \(e\).
For \(x\in A\), \eqref{eq:induced-linear} bounds the number of edges
in \(G[B\setminus N(x)]\) by \((k-1)\overline d_B(x)\); there is an
analogous bound for \(y\in B\). Summing over the edges incident with
\(x\) or \(y\), we obtain
\[
 |\mathcal B|\le(k-1)\Bigl(\sum_{x\in A}d_A(x)\overline d_B(x)
 +\sum_{y\in B}d_B(y)\overline d_A(y)\Bigr)
 \le2(k-1)\gamma\varepsilon n^2.
\]
For each \(x\in T^0\), choose \(e_x\) uniformly from \(E(G[A])\),
and for each \(x\in T^1\), choose \(e_x\) uniformly from \(E(G[B])\),
with all choices independent. For distinct \(x,y\) in the same
part of \(T\), the probability that \(e_x,e_y\) meet is at most
\(2\varepsilon/\eta\). For \(xy\in E(T)\), the probability that
\((e_x,e_y)\in\mathcal B\) is at most
\(2(k-1)\gamma\varepsilon/\eta^2\).
The union bound therefore gives total failure probability at most
\[
 k^2\varepsilon/\eta+
 2k^2(k-1)\gamma\varepsilon/\eta^2<1,
\]
by the hierarchy. Consequently,
we can choose the edges \(e_x\), \(x\in V(T)\), pairwise disjoint,
with \(\nu(G[e_x,e_y])\ge2\) for every \(xy\in E(T)\).

Apply Lemma~\ref{lem:coordinates} with \(S_i=T^i\) for \(i\in\{0,1\}\),
\(U_0=A\), \(U_1=B\), and \(\lambda=\varepsilon n\).
Condition~\textup{(a)} has no edges to check because \(T^0,T^1\) are
independent. Conditions~\textup{(b)--(c)} follow from the chosen edges.
Condition~\textup{(d)} follows from the hypotheses and
\(\min\{|A|,|B|\}\ge cn>v(F)(\varepsilon n+1)\). By
Lemma~\ref{lem:coordinates}, \(G\) contains a copy of \(F\), a
contradiction.
\end{proof}

A vertex \(z\) of \(T\) is a \emph{centroid} if every component of
\(T-z\) has order at most \(\lfloor k/2\rfloor\). Every tree has a
centroid.

\begin{lemma}\label{lem:pinnededge}
Let \(z\) be a centroid of \(T\), and let \(\sigma\) be the largest
order of a component of \(T-z\). For every fixed \(c,\delta>0\),
there are \(\varepsilon>0\) and an integer \(n_0\) such that the following holds
for every \(n\ge n_0\). Let \(G\) be a graph on \(n\) vertices that
does not contain \(F\) as a subgraph, and let \(A,B\) be disjoint
sets satisfying
\begin{enumerate}[label=\textup{(\alph*)},leftmargin=2.7em,
                  itemsep=0pt,topsep=2pt]
\item \(|A|,|B|\ge cn\);
\item \(\max_{x\in A}\overline d_B(x),
       \max_{y\in B}\overline d_A(y)\le\varepsilon n\);
\item \(G[A]\) does not contain any member of \(\M(F)\) as a subgraph;
\item \(\Delta(G[A])\le\varepsilon n\) and
      \(e(G[A])\ge(\sigma-1+\delta)|A|\).
\end{enumerate}
Then there is no edge \(uv\in E(G-A)\) such that
\(\overline d_A(u)\le\varepsilon n\) and
\(\overline d_A(v)\Delta(G[A])\le\varepsilon n\).
\end{lemma}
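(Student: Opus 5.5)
We argue by contradiction. Suppose \(uv\in E(G-A)\) satisfies \(\overline d_A(u)\le\varepsilon n\) and \(\overline d_A(v)\Delta(G[A])\le\varepsilon n\). We will build the edges \(e_x\), \(x\in V(T)\setminus\{z\}\), needed in Corollary~\ref{cor:pinned}, with \(z\) the centroid and \(e_z=uv\). Then Corollary~\ref{cor:pinned}, applied to \(A,B\) and \(\lambda=\varepsilon n\), gives a copy of \(F\), which is the contradiction.

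First note that (d) forces \(e(G[A])>0\), so \(\Delta(G[A])\ge1\) and hence \(\overline d_A(v)\le\varepsilon n\). Thus \(C=N(u)\cap N(v)\cap A\) has size at least \(cn-2\varepsilon n>v(F)(\varepsilon n+1)\), and \(|B|\ge cn\) is also large. Condition~(d) of the corollary is hypothesis~(b). So it remains to produce the edges \(e_x\) in \(G[A]\) satisfying conditions (a),(b) of Corollary~\ref{cor:pinned}.

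\emph{Cleaning \(G[A]\).} Let \(Q=A\setminus N(u)\), so \(|Q|\le\varepsilon n\). Let \(H\) be obtained from \(G[A]\) in two steps.
\begin{enumerate}[label=\textup{(\roman*)},leftmargin=2em]
\item Delete every edge incident with \(A\setminus N(v)\). This removes at most \(\overline d_A(v)\Delta(G[A])\le\varepsilon n\) edges, which is exactly where the asymmetric hypothesis on \(v\) is used.
\item Delete every edge inside \(Q\). Since \(G[A]\) contains no member of \(\M(F)\), neither does \(G[Q]\). By Lemma~\ref{lem:decomp}, \(e(G[Q])\le g(|Q|)\le(k-1)\varepsilon n\).
\end{enumerate}
With \(\varepsilon\ll c\delta\), we get \(e(H)\ge(\sigma-1+\delta/2)|A|\). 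Every edge of \(H\) has both endpoints in \(N(v)\) and at least one endpoint in \(N(u)\). Hence for every edge \(e=\{a,b\}\) of \(H\), say with \(a\in N(u)\), the pairs \(a u\) and \(b v\) form a matching of size two in \(G[e,\{u,v\}]\).

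\emph{Embedding the blowups.} Handle the components \(K\) of \(T-z\) one at a time. Each has \(|K|\le\sigma\), so an edge blowup \(D_K\) of \(K\) has \(2|K|\le 2\sigma\) vertices. Remove from \(H\) the at most \(2k\) vertices already used, losing at most \(2k\varepsilon n\) edges since \(\Delta(G[A])\le\varepsilon n\). The remaining graph still has more than \((\sigma-1)|A|\ge\frac{v(D_K)-2}{2}|A|\) edges. The Erd\H{o}s--S\'os bound \eqref{eq:erdos-sos}, applied on the vertex set \(A\), then gives a copy of \(D_K\) in it. Let \(e_x\) be the image of the replacement edge of each \(x\in K\). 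Then:
\begin{itemize}[leftmargin=2em]
\item the connector edges give \(E(e_x,e_y)\ne\varnothing\) for every \(xy\in E(K)\);
\item tree edges of \(T-z\) never join different components, so condition~(a) of the corollary holds;
\item for the unique \(p\in K\) adjacent to \(z\), \(e_p\) is an edge of \(H\), so condition~(b) holds by the observation above.
\end{itemize}
All \(e_x\) are pairwise disjoint and lie in \(A\), while \(u,v\notin A\).

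The main obstacle is the step ensuring that the replacement edge at each neighbor of \(z\) sees both \(u\) and \(v\) in a matching. One cannot simply restrict \(G[A]\) to \(C\): the at most \(\varepsilon n\) nonneighbors of \(u\) may carry up to \(\varepsilon n\cdot\Delta(G[A])\) edges, which need not be \(o(n)\). The remedy is to require only one endpoint in \(N(u)\), and to bound the edges inside \(Q\) linearly using the \(\M(F)\)-freeness of \(G[A]\) via Lemma~\ref{lem:decomp}. Meanwhile, the stronger hypothesis \(\overline d_A(v)\Delta(G[A])\le\varepsilon n\) lets us insist that both endpoints lie in \(N(v)\).
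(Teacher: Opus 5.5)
Your proposal is correct and follows the paper's proof essentially step for step. Both arguments use the same cleaned graph \(H\), which keeps only edges with both endpoints in \(N(v)\) and at least one endpoint in \(N(u)\). Both bound the lost edges by \(\overline d_A(v)\Delta(G[A])+(k-1)\overline d_A(u)\) via Lemma~\ref{lem:decomp}, embed the blowups of the components of \(T-z\) successively by Erd\H{o}s--S\'os, and finish with Corollary~\ref{cor:pinned}. The only difference is that you get \(\overline d_A(v)\le\varepsilon n\) from the integrality bound \(\Delta(G[A])\ge1\), whereas the paper uses \(\Delta(G[A])\ge2\delta\); this is a harmless simplification.
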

\begin{proof}
Choose
\[
 \min\{1,c,\delta\}\gg\varepsilon\gg n_0^{-1}>0.
\]
Suppose that \(uv\in E(G-A)\) satisfies
\(\overline d_A(u)\le\varepsilon n\) and
\(\overline d_A(v)\Delta(G[A])\le\varepsilon n\). We will map one cycle edge of \(L_z\) onto \(uv\). Let \(H\) be the graph on
\(A\cap N(v)\) obtained from \(G[A\cap N(v)]\) by deleting all edges
with both endpoints outside \(N(u)\). No induced subgraph of \(G[A]\) contains a member of \(\M(F)\)
as a subgraph. By Lemma~\ref{lem:decomp},
\[
 e(H)\ge e(G[A])-
 \overline d_A(v)\Delta(G[A])-(k-1)\overline d_A(u)
 \ge e(G[A])-k\varepsilon n.
\]
Every edge of \(H\) has both endpoints adjacent to \(v\), and at
least one endpoint adjacent to \(u\).

Let \(S_1,\ldots,S_t\) be the components of \(T-z\). For each
\(j\in[t]\), let \(D_j\) be an edge blowup of \(S_j\). We embed \(D_1,\ldots,D_t\) successively in \(H\).
Let \(H_j\) be obtained from \(H\) by deleting the vertices used for
\(D_1,\ldots,D_{j-1}\). Fewer than \(2(k-1)\) vertices have been used. Since \(\Delta(H)\le\Delta(G[A])\le\varepsilon n\), deleting
those vertices removes fewer than \(2(k-1)\varepsilon n\) further
edges. Together with \(e(H)\ge e(G[A])-k\varepsilon n\), this gives
\[
 e(H_j)\ge e(G[A])-(3k-2)\varepsilon n
 >(\sigma-1)|A|+\delta|A|/2
 >(v(S_j)-1)v(H_j).
\]
By the Erd\H{o}s--S\'os bound, \(H_j\) contains a copy of \(D_j\).

For each \(x\in V(T)\setminus\{z\}\), write its replacement edge as
\(e_x=\{a_x,b_x\}\), choosing \(a_x\in N(u)\); both endpoints lie in
\(N(v)\). The edges \(ua_x,vb_x\) form a matching between \(uv\) and
\(e_x\) whenever \(xz\in E(T)\).
The density assumption also gives
\(\Delta(G[A])\ge2\delta\), and so
\[
 |N(u)\cap N(v)\cap A|
 \ge |A|-\varepsilon n-\frac{\varepsilon n}{2\delta}
 \ge\Bigl(c-\varepsilon-\frac{\varepsilon}{2\delta}\Bigr)n.
\]
Thus \(|N(u)\cap N(v)\cap A|,|B|>v(F)(\varepsilon n+1)\).
The replacement edges in the copies of \(D_1,\ldots,D_t\) satisfy
Corollary~\ref{cor:pinned}\textup{(a)}, and the edges \(ua_x,vb_x\)
verify \textup{(b)}. The common neighborhood bound verifies
\textup{(c)}, while the hypothesis on \(A,B\) gives \textup{(d)}.
Corollary~\ref{cor:pinned}, with \(\lambda=\varepsilon n\), therefore
gives an embedding of \(F\) into \(G\), a contradiction.
\end{proof}

A partition \(V(G)=X\dot\cup Y\) is \emph{admissible with parameter
\(\gamma\)} if it
minimizes
\(e(G[X])+e(G[Y])\) and satisfies
\[
 \bigl||X|-n/2\bigr|\le \gamma\sqrt n,
 \quad
 e(G[X])+e(G[Y])+\overline e(X,Y)\le \gamma n,
\]
together with
\begin{equation}\label{eq:admissible-local}
 \overline d_Y(x)\le d_X(x)+\gamma\quad(x\in X),
 \quad
 \overline d_X(y)\le d_Y(y)+\gamma\quad(y\in Y).
\end{equation}

\begin{lemma}\label{lem:cleaning}
Fix \(\gamma\ge1\), a positive integer \(\ell\), and \(\eta>0\).
There are positive integers \(s,n_0\), depending only on the graph \(F\) and
the constants \(\gamma,\ell,\eta\), such that the following holds for
\(n\ge n_0\). Let \(G\) be a graph on \(n\) vertices
that does not contain \(F\) as a subgraph, with \(e(G)\ge\Psi(n)\),
and let \(X\dot\cup Y\) be a partition admissible with parameter \(\gamma\).
If \(G\) contains no core with parameter \(\ell\), then, after possibly interchanging
\(X,Y\), there are sets \(P\subseteq R\subseteq V(G)\), \(|R|\le s\),
such that, with \(A=X\setminus R\) and \(B=Y\setminus R\), the
following hold.
\begin{enumerate}[label=\textup{(\roman*)},leftmargin=2em]
\item Neither \(G[A]\) nor \(G[B]\) contains a member of \(\M(F)\)
      as a subgraph, \(\Delta(G[A])+\Delta(G[B])\le\eta n\),
      \(e(G[A])\ge(k-\tau(T))n/2-\eta n\), and \(e(G[B])\le\eta n\).
\item \(G[A\cup P]\) does not contain any member of \(\M(F)\) as a
      subgraph, and \(e(B,R)\le |B||P|+\eta n\).
\item Either \(P=R\) and \(|R\cap Y|\Delta(G[A])\le\eta n\), or
      there is an edge \(uv\in E(G)\) with \(u\in R\setminus P\),
      \(v\in B\), and \(d_Y(v)\Delta(G[A])\le\eta n\).
\end{enumerate}
\end{lemma}
\begin{proof}
We may assume \(0<\eta<1\). All constants below depend only on
\(F,\gamma,\ell,\eta\). Choose
\[
 \eta\gg\xi\gg\varepsilon\gg\varepsilon_0>0,
 \qquad s_0=\lceil2\gamma/\varepsilon_0\rceil,
\]
where \(\varepsilon\) is small enough for
Lemma~\ref{lem:cleanpair} with \(c=1/3\) and error \(\xi\).
We fix \(n_0\) only after the finite sequence of constants in
Step~2 has been chosen. All estimates are then uniform over the
possible choices of \(\beta,\rho\) from this sequence.
For \(v\in X\), define \(d_{\mathrm{in}}(v)=d_X(v)\), and for
\(v\in Y\), define \(d_{\mathrm{in}}(v)=d_Y(v)\). Let
\(R_0=\{v\in V(G):d_{\mathrm{in}}(v)>\varepsilon_0n\}\),
\(A_0=X\setminus R_0\), and \(B_0=Y\setminus R_0\).
Since \(\sum_{v\in V(G)}d_{\mathrm{in}}(v)\le2\gamma n\), we have
\(|R_0|\le s_0\). For \(x\in A_0\),
\[
 \overline d_{B_0}(x)\le\varepsilon_0n+\gamma
 \le2\varepsilon_0n<\varepsilon n,
\]
by \eqref{eq:admissible-local}; the same bound holds for
\(\overline d_{A_0}(y)\) when \(y\in B_0\). Moreover,
\(|A_0|,|B_0|\ge n/3\),
\(\Delta(G[A_0])+\Delta(G[B_0])\le2\varepsilon_0n<\varepsilon n\), and
\(\overline e(A_0,B_0)\le\gamma n\).
Thus \(A_0,B_0\) satisfy every hypothesis of
Lemma~\ref{lem:cleanpair}. After interchanging \(X,Y\), if necessary,
\(e(G[B_0])\le\xi n\), and neither \(G[A_0]\) nor \(G[B_0]\)
contains a member of \(\M(F)\) as a subgraph. By
Lemma~\ref{lem:decomp}, \(e(H)\le(k-1)v(H)\) for every induced
subgraph \(H\) of \(G[A_0]\) or \(G[B_0]\). Applying Tur\'an's theorem
to the complement of a nonempty \(H\), we obtain
\(\alpha(H)\ge v(H)^2/(2e(H)+v(H))\ge v(H)/(2k-1)\).
Thus every such subgraph on at least \((2k-1)\ell\) vertices contains an
independent set of order \(\ell\). We complete the proof in four steps.

\smallskip
\noindent\emph{Step 1. Common neighborhoods and edge counts.}\par
Let \(A\subseteq A_0\), \(B\subseteq B_0\), and let
\(K\subseteq V(G)\setminus(A\cup B)\) have size \(\tau(T)\). Define
\(A_K=A\cap\bigcap_{u\in K}N(u), \quad B_K=B\cap\bigcap_{u\in K}N(u)\).
Then
\(|A_K||B_K|\le2\ell(\gamma+2k-1)n\).
If \(|A_K|<2(2k-1)\ell\), then
\(|A_K||B_K|<2(2k-1)\ell n\le2\ell(\gamma+2k-1)n\). Otherwise, at least half the
vertices of \(A_K\) have at most \(2\gamma n/|A_K|\) nonneighbors in
\(B_K\). Choose an independent set \(U\) of order \(\ell\) among them.
At least \(|B_K|-2\ell\gamma n/|A_K|\) vertices of \(B_K\) are complete to
\(U\). If at least \((2k-1)\ell\) such vertices existed, they would
contain an independent set \(V\) of order \(\ell\), and \(K,U,V\) would
form a core with parameter \(\ell\), contrary to the hypothesis. Therefore
\(|B_K|-2\ell\gamma n/|A_K|<(2k-1)\ell\), which gives the bound for
\(|A_K||B_K|\).

We next bound the edges remaining after removing a set of fixed size.
Fix an integer \(s'\ge s_0\). The estimates below hold above a threshold depending on \(s'\);
we will apply them with \(s'=s\) from Step~2. Let
\(R_0\subseteq R'\subseteq V(G)\) with \(|R'|\le s'\), and set
\(A'=X\setminus R'\), \(B'=Y\setminus R'\). Then
\(e(G[A'])+e(G[B'])\ge(k-\tau(T))n/2-\xi n\). At most
\(6\ell(\gamma+2k-1)\binom{s'}{\tau(T)}\)
vertices \(b\in B'\) have at least \(\tau(T)\) neighbors in
\((R'\setminus R_0)\cap Y\).

To bound the number of these vertices \(b\in B'\), fix
\(K\subseteq(R'\setminus R_0)\cap Y\) with \(|K|=\tau(T)\). Each \(u\in K\) has
\(d_{\mathrm{in}}(u)\le\varepsilon_0n\). By
\eqref{eq:admissible-local}, every such \(u\) has at most
\(\varepsilon_0n+\gamma\) nonneighbors in \(X\). Hence
\[
 \left|A'\cap\bigcap_{u\in K}N(u)\right|
 \ge |X|-s'-\tau(T)(\varepsilon_0n+\gamma)>n/3,
\]
by the hierarchy and \(|X|=n/2+O(\sqrt n)\).
With \(A=A'\) and \(B=B'\), the bound for \(|A_K||B_K|\) gives
\(|B_K|\le6\ell(\gamma+2k-1)\). Summing over the at most
\(\binom{s'}{\tau(T)}\) choices of \(K\) proves the stated bound on the number of such vertices.

For the lower bound on \(e(G[A'])+e(G[B'])\), let
\(t(x,y)=|N(x)\cap N(y)\cap R'|\) for
\(x\in A'\), \(y\in B'\), and set
\(c_1=2\ell(\gamma+2k-1)\binom{s'}{\tau(T)}\).
By the bound for \(|A_K||B_K|\), at most \(c_1n\) pairs satisfy
\(t(x,y)\ge\tau(T)\). Since \(t(x,y)\le s'\), counting the
triples \((u,x,y)\) with \(u\in R'\), \(x\in A'\), \(y\in B'\), and
\(ux,uy\in E(G)\), gives
\[
 \sum_{u\in R'}d_{A'}(u)d_{B'}(u)
 =\sum_{(x,y)\in A'\times B'}t(x,y)
 \le(\tau(T)-1)|A'||B'|+s'c_1n.
\]
Because \(|A'|,|B'|\ge n/3\),
\[
 \sum_{u\in R'}
 \frac{d_{A'}(u)d_{B'}(u)}{|A'||B'|}
 \le\tau(T)-1+\frac{9s'c_1}{n}.
\]
For \(u\in R'\cap X\), set
\(w(u)=d_{A'}(u)-\overline d_{B'}(u)\); for
\(u\in R'\cap Y\), set
\(w(u)=d_{B'}(u)-\overline d_{A'}(u)\). For every \(u\in R'\), let
\(a_u=d_{A'}(u)/|A'|\) and \(b_u=d_{B'}(u)/|B'|\).
Since \(0\le a_u,b_u\le1\), we have \(a_u+b_u-1\le a_u b_u\).
For \(u\in R'\cap X\),
\(w(u)\le |A'|a_u b_u+\bigl||A'|-|B'|\bigr|\);
for \(u\in R'\cap Y\),
\(w(u)\le |B'|a_u b_u+\bigl||A'|-|B'|\bigr|\).
Since \(s'\) is fixed and
\(\bigl||A'|-|B'|\bigr|\le2\gamma\sqrt n+s'\), we obtain
\[
 \sum_{u\in R'}w(u)
 \le\frac{\tau(T)-1}{2}n+O(\sqrt n).
\]
The exact edge count is
\[
 e(G)=|X||Y|+e(G[A'])+e(G[B'])-\overline e(A',B')
 +\sum_{u\in R'}w(u)+e(G[R'])-|R'\cap X||R'\cap Y|.
\]
On the other hand, Lemma~\ref{lem:decomp} gives
\(e(G)\ge\Psi(n)\ge |X||Y|+(k-1)|X|-\binom{k}{2}\).
Using \(\overline e(A',B')\ge0\), \(|R'|\le s'\), and the bound for
\(\sum_{u\in R'}w(u)\) in the exact edge count, we obtain
\[
 \begin{aligned}
 e(G[A'])+e(G[B'])
 &\ge(k-1)|X|-\frac{\tau(T)-1}{2}n-O(\sqrt n)\\
 &\ge\frac{k-\tau(T)}{2}n-O(\sqrt n).
 \end{aligned}
\]
The second inequality uses \(|X|=n/2+O(\sqrt n)\).
Since \(n_0\) is chosen last, the error is at most \(\xi n\), proving
\(e(G[A'])+e(G[B'])\ge(k-\tau(T))n/2-\xi n\).

\smallskip
\noindent\emph{Step 2. Choose the degree bound \(\beta n\).}\par
Put \(q_0=s_0+\tau(T)-1\). Choose a positive integer \(t_1\)
and a constant \(\theta>0\) with
\[
 \gamma^{-1}\gg t_1^{-1}>0,
 \qquad \min\{\eta,(q_0+t_1+1)^{-1}\}\gg\theta>0.
\]
We will choose \(\beta,\rho>0\), a positive integer \(s\), and sets
\(R_0\subseteq R^-\subseteq R\subseteq R^+, \quad W=Y\setminus R^+\),
such that \(R=\{v\in V(G):d_{\mathrm{in}}(v)>\beta n\}\), \(|R^+|\le s\),
\(|W|\ge n/3\), and
\begin{equation}\label{eq:scale-summary}
 2^{|R^-|}\rho\le\theta,
 \qquad \rho\gg\beta,
 \qquad \eta\gg(|R^-|+s_0+1)\beta.
\end{equation}
Define
\(E=\bigcup_{u\in(R\setminus R^-)\cap X}(W\setminus N(u)) \cup\bigcup_{u\in(R\setminus R^-)\cap Y}(W\cap N(u))\).
We will also ensure that \(|E|\le\theta|W|\).

To obtain these sets, choose an integer \(p\) with
\(\theta,\gamma^{-1}\gg p^{-1}>0\), and choose the finite hierarchy
\[
 \varepsilon_0\gg\varepsilon_1\gg\cdots\gg\varepsilon_p>0
\]
recursively as follows. For \(0\le i<p\), once \(\varepsilon_i\)
is fixed, put \(s_i=\lceil2\gamma/\varepsilon_i\rceil\) and
\(\rho_i=\theta2^{-s_i}\), and choose \(\varepsilon_{i+1}\) so that
\(\rho_i\gg\varepsilon_{i+1}\) and
\(\eta\gg(s_i+s_0+1)\varepsilon_{i+1}\).
Finally put \(s_p=\lceil2\gamma/\varepsilon_p\rceil\) and choose
\(n_0\) sufficiently large in terms of all these constants.
For \(0\le i\le p\), let
\(R_i=\{v\in V(G):d_{\mathrm{in}}(v)>\varepsilon_i n\}\), so
\(|R_i|\le s_i\), and set \(W=Y\setminus R_p\).
Then \(|X|-s_p,|Y|-s_p\ge n/3\), so \(|W|\ge n/3\).
For each \(i\in\{0,\ldots,p-1\}\), set \(H_i=R_{i+1}\setminus R_i\)
and define
\(E_i=\bigcup_{u\in H_i\cap X}(W\setminus N(u)) \cup\bigcup_{u\in H_i\cap Y}(W\cap N(u))\).
The sets \(H_i\) are pairwise disjoint. If \(u\in H_i\cap X\),
then
\(|W\setminus N(u)|\le\overline d_Y(u)\le d_X(u)+\gamma\); if
\(u\in H_i\cap Y\), then \(|W\cap N(u)|\le d_Y(u)\). Thus
\[
 \sum_{i=0}^{p-1}|E_i|
 \le\sum_{u\in R_p}d_{\mathrm{in}}(u)+\gamma|R_p\cap X|
 \le2\gamma n+\gamma s_p.
\]
Averaging and the choices of \(p,n_0\) give
\[
 \min_{0\le i<p}|E_i|
 \le \frac{2\gamma n+O(1)}p<\theta n/3\le\theta|W|.
\]
Choose
\(j\in\{0,\ldots,p-1\}\) with \(|E_j|\le\theta|W|\), and set
\(R^-=R_j,\quad R=R_{j+1},\quad R^+=R_p, \quad \beta=\varepsilon_{j+1},\quad \rho=\rho_j,\quad s=s_p\).
The hierarchy and \(|R^-|\le s_j\) imply
\eqref{eq:scale-summary}. In particular, \(\beta\ge\varepsilon_p\)
and \(\rho\ge\rho_{p-1}\); these lower bounds depend only on the
fixed parameters.

Set \(A=X\setminus R\) and \(B=Y\setminus R\). Then
\(d_{\mathrm{in}}(v)\le\beta n\) for every \(v\in A\cup B\),
and \(W\subseteq B\).

\smallskip
\noindent\emph{Step 3. Choose \(P\).}\par
Let
\(Z=\{b\in W:d_{(R^+\setminus R_0)\cap Y}(b)\ge\tau(T)\}\).
The bound in Step~1, applied with \(R'=R^+\) and \(s'=s\), gives
\(|Z|\le c_Z:=6\ell(\gamma+2k-1)\binom{s}{\tau(T)}\).
For \(b\in W\), define
\(q(b)=d_R(b)-|R\cap X| =d_{R\cap Y}(b)-\overline d_{R\cap X}(b)\).
If \(b\notin Z\), then \(b\) has at most \(s_0\) neighbors in
\(R_0\cap Y\) and at most \(\tau(T)-1\) neighbors in
\((R^+\setminus R_0)\cap Y\), so \(q(b)\le q_0\). For every \(b\in W\),
\(q(b)\ge-\overline d_{R^+\cap X}(b)\). By \eqref{eq:admissible-local},
\(\sum_{u\in R^+\cap X}\overline d_W(u)
\le\sum_{u\in R^+\cap X}(d_X(u)+\gamma)
\le2\gamma n+\gamma s\). Consequently,
\[
 |\{b\in W:q(b)<-t_1\}|
 \le\frac{2\gamma n+\gamma s}{t_1}.
\]
By the choices of \(t_1,n_0\), the right side is smaller than
\(|W|/4\). Hence at least \(3|W|/4\) vertices satisfy
\(q(b)\ge-t_1\).

For \(S\subseteq R^-\), let
\(W_S=\{b\in W\setminus(E\cup Z):N(b)\cap R^-=S\}\).
Every vertex of \(W\setminus E\) is adjacent to every vertex of
\((R\setminus R^-)\cap X\) and to no vertex of
\((R\setminus R^-)\cap Y\). Therefore \(q(b)=|S|-|R^-\cap X|\) for every \(b\in W_S\).
Define \(\mathcal S=\{S\subseteq R^-:|W_S|\ge\rho|W|\}\).
There are at most \(2^{|R^-|}\) sets \(W_S\); hence
\(\bigl|W\setminus(E\cup Z\cup\bigcup_{S\in\mathcal S}W_S)\bigr|
\le2^{|R^-|}\rho|W|\le\theta|W|\), by \eqref{eq:scale-summary}.
Moreover,
\(\bigl|W\setminus\bigcup_{S\in\mathcal S}W_S\bigr|
\le2\theta|W|+c_Z<3|W|/4\).
Since at least \(3|W|/4\) vertices satisfy \(q(b)\ge-t_1\), some
\(S\in\mathcal S\) satisfies
\(|S|-|R^-\cap X|\ge-t_1\). Choose \(S_*\in\mathcal S\) maximizing
\(|S|-|R^-\cap X|\), and define
\(q_*=|S_*|-|R^-\cap X|, \quad P=S_*\cup((R\setminus R^-)\cap X)\).
Then \(q_*\ge-t_1\), \(|P|=|R\cap X|+q_*\), and \(P\subseteq N(b)\) for every \(b\in W_{S_*}\).

For every \(S\in\mathcal S\) and \(b\in W_S\), we have \(q(b)\le q_*\).
The set \(W\setminus(Z\cup\bigcup_{S\in\mathcal S}W_S)\) has size at
most \(2\theta|W|\); each of its vertices satisfies \(q(b)\le q_0\).
The vertices of \(Z\) satisfy \(q(b)\le |R|\le s\). Using also
\(q_*\ge-t_1\), we obtain
\[
 e(W,R)=|W||R\cap X|+\sum_{b\in W}q(b)
 \le |W||P|+2\theta(q_0+t_1)|W|+(s+t_1)c_Z.
\]
Moreover, \(|B\setminus W|\le s\) and \(|R|\le s\). Thus
\[
 e(B,R)\le |B||P|+2\theta(q_0+t_1)n+O(1)
 \le |B||P|+\eta n,
\]
by the hierarchy and the choice of \(n_0\).

Suppose that \(M_*\in\M(F)\) lies in \(G[A\cup P]\). Every vertex
of \(A\) has at most \(\beta n+\gamma\) nonneighbors in \(B\), while
\(W_{S_*}\) is complete to \(P\). Using \(|W_{S_*}|\ge\rho|W|\), \(|W|\ge n/3\), and
\(\rho\gg\beta\), we have
\[
 \bigl|W_{S_*}\cap\bigcap_{x\in V(M_*)}N(x)\bigr|
 \ge\rho|W|-v(F)(\beta n+\gamma)
 >v(F),
\]
by the hierarchy.
Choose \(B_*\subseteq W_{S_*}\cap\bigcap_{x\in V(M_*)}N(x)\)
of size \(v(F)\). For every \(y\in B_*\), we have \(d_Y(y)\le\beta n\),
so \eqref{eq:admissible-local} gives
\(\overline d_A(y)\le\beta n+\gamma\). Therefore, using \(|A|\ge n/3-s\) and
the hierarchy,
\[
 \bigl|(A\setminus V(M_*))\cap\bigcap_{y\in B_*}N(y)\bigr|
 \ge |A|-v(F)(\beta n+\gamma)-v(F)>v(F),
\]
by the choice of \(n_0\).
Choose \(A_*\subseteq(A\setminus V(M_*))\cap\bigcap_{y\in B_*}N(y)\)
of size \(v(F)\). Then \(G[V(M_*)\cup A_*\cup B_*]\) contains
\((M_*\cup\ind{v(F)})\vee\ind{v(F)}\), so it contains \(F\), a
contradiction. Thus \textup{(ii)} holds.

\smallskip
\noindent\emph{Step 4. Verify the three conclusions.}\par
Since \(A\subseteq A_0\) and \(B\subseteq B_0\), neither induced graph
contains a member of \(\M(F)\) as a subgraph. Apply the lower bound on
\(e(G[A'])+e(G[B'])\) with \(R'=R\) and \(s'=s\).
Together with the edge bound for \(G[B_0]\), we obtain
\(e(G[A])\ge(k-\tau(T))n/2-2\xi n \ge(k-\tau(T))n/2-\eta n, \quad e(G[B])\le\eta n\).
Also,
\(\Delta(G[A])+\Delta(G[B])\le2\beta n<\eta n\), proving
\textup{(i)}.

If \(P=R\), then \((R\setminus R^-)\cap Y=\varnothing\), so
\(|R\cap Y|\Delta(G[A]) \le |R^-|\beta n<\eta n\).
Suppose that \(P\ne R\), and choose \(u\in R\setminus P\). If \(u\in X\), minimality of \(e(G[X])+e(G[Y])\) gives
\(d_Y(u)\ge d_X(u)=d_{\mathrm{in}}(u)\); if \(u\in Y\), then
\(d_Y(u)=d_{\mathrm{in}}(u)\). In both cases,
\(d_W(u)\ge d_Y(u)-|R^+|>\beta n-s>\beta n/4\),
where the last inequality follows from \(\beta\ge\varepsilon_p\)
and the choice of \(n_0\).
Since \(e(G[B_0])\le\xi n\),
\(\bigl|\{b\in B_0:d_{B_0}(b)>8\xi/\beta\}\bigr|<\beta n/4\).
We may therefore choose \(v\in N(u)\cap W\) with
\(d_{B_0}(v)\le8\xi/\beta\). Since
\(Y=B_0\dot\cup(R_0\cap Y)\) and \(|R_0|\le s_0\), it follows that
\(d_Y(v)\le 8\xi/\beta+s_0\).
Since \(\Delta(G[A])\le\beta n\),
\[
 d_Y(v)\Delta(G[A])
 \le(8\xi/\beta+s_0)\beta n
 =(8\xi+s_0\beta)n<\eta n,
\]
by the hierarchy and \eqref{eq:scale-summary}. Thus \(uv\) satisfies the second
alternative in \textup{(iii)}.
\end{proof}

\subsection{Reduction and progressive induction}

\begin{lemma}\label{lem:reduction}
Let \(t_0,\ell\) be the constants in Lemma~\ref{lem:core}. There is an
integer \(n_1=n_1(F)\) such that, for every \(n\ge n_1\) and every
\(G\in\EX(n,F)\), either \(G\in\mathcal G(n)\), or there is a set
\(S\subseteq V(G)\) with \(|S|=2t_0\) satisfying
\(e(G)-e(G-S)\le t_0(n-t_0)+(k-1)t_0-\binom{k}{2}-1\).
\end{lemma}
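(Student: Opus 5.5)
The plan is to split according to whether \(G\) contains a core with parameter \(\ell\). Let \(G\in\EX(n,F)\), so \(e(G)=\ex(n,F)\ge\Psi(n)\) by Lemma~\ref{lem:decomp}. If \(G\) contains such a core, Lemma~\ref{lem:core} applies directly and gives either \(G\in\mathcal G(n)\) or the required set \(S=W\). So from now on I assume \(G\) has no core with parameter \(\ell\). In this case I aim to show that either a suitable deletion set \(S\) exists, or there is an admissible partition to which Lemma~\ref{lem:cleaning} applies. In the second situation I will derive a contradiction with \(e(G)\ge\Psi(n)\).

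\emph{Step 1: an admissible partition.} Apply Theorem~\ref{thm:weak} to \(\mathcal H=\{F\}\), with \(p=2\), \(\lambda=0\) and \(D\) an edge blowup of \(T\). This is allowed because \(F\subseteq(D\cup\ind{s})\vee\ind{s}\subseteq D\vee T_1(r)\). Take \(X\dot\cup Y\) minimizing \(e(G[X])+e(G[Y])\); parts (i)--(iii) of the theorem give the size and sparsity conditions with some \(\gamma\). The remaining difficulty is the local condition \eqref{eq:admissible-local}. Since \(\overline d_Y(x)=|Y|-d(x)+d_X(x)\), this condition amounts to \(d(x)\ge|Y|-\gamma\) for \(x\in X\), and similarly for \(y\in Y\).

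Here I use that \(G\) is extremal rather than merely \(F\)-free. Suppose there are at least \(2t_0\) vertices of degree at most \(n/2+O(1)\), and at least a bounded number of them have degree at most \(n/2-K\) for a suitably large constant \(K\). Then \(2t_0\) such vertices form a set \(S\) with \(e(G)-e(G-S)\le\sum_{v\in S}d(v)\), and this is below \(t_0(n-t_0)+(k-1)t_0-\binom k2-1\) once \(K\) is chosen large in terms of \(t_0,k\). Otherwise only \(O(1)\) vertices violate the degree condition. I would then take a minimizing partition of \(G\) with these vertices removed, and reinsert them one at a time into the side where they have more neighbours, enlarging \(\gamma\) as needed. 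The balance condition \(||X|-n/2|\le\gamma\sqrt n\) together with minimality should turn the degree bound into \eqref{eq:admissible-local}.

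\emph{Step 2: edge count after cleaning.} Apply Lemma~\ref{lem:cleaning} with \(\eta\) much smaller than \(\delta\), where \(\delta\) is the constant needed for Lemma~\ref{lem:pinnededge}. This gives \(P\subseteq R\), \(A\), and \(B\). I then write the exact count
\[
 e(G)=e(A,B)+e(G[A])+e(G[B])+e(A,R)+e(B,R)+e(G[R]).
\]
I bound the terms as follows:
\begin{itemize}[leftmargin=2em]
\item \(e(A,B)\le|A||B|\);
\item \(e(G[B])\le\eta n\);
\item \(e(B,R)\le|B||P|+\eta n\), by Lemma~\ref{lem:cleaning}(ii);
\item \(e(G[A])+e(A,P)\le g(|A|+|P|)\), since \(G[A\cup P]\) contains no member of \(\M(F)\);
\item \(e(A,R\setminus P)\le|A||R\setminus P|\).
\end{itemize}
With \(a=|A|+|P|\) and \(b=|B|+|R\setminus P|\), these bounds give \(e(G)\le ab+g(a)+2\eta n+O(1)\) up to correction terms. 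To get a contradiction I need to gain a linear amount over this.

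\emph{Step 3: extracting the linear gain, the main obstacle.} The gain must come from alternative (iii) of Lemma~\ref{lem:cleaning}.

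In the case \(P=R\), every vertex of \(R\cap Y\) is placed in \(P\) but sees only few of \(B\). The contribution \(|R\cap Y|\Delta(G[A])\le\eta n\) should give a loss of order \(|R\cap Y|\cdot|B|\) compared with putting those vertices on the \(b\)-side. Alternatively, I would show \(R\cap Y=\varnothing\) and obtain a strict inequality from \(e(G[B])\) and \(\overline e(A,B)\). In that situation \(G\) is close to \(J\vee\ind b\), and I would then compare with \(\Psi(n)\) through the increment bound of Lemma~\ref{lem:decomp}, producing \(S\).

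In the case \(P\ne R\), there is an edge \(uv\) with \(u\in R\setminus P\) and \(v\in B\). Since \(u\) has \(d_A(u)\) close to \(|A|\) (it is nearly complete to \(A\), or else it loses edges counted against \(|A||R\setminus P|\)) and \(v\) has few nonneighbours in \(A\) weighted by \(\Delta(G[A])\), Lemma~\ref{lem:pinnededge} applies. Its density hypothesis follows from \(e(G[A])\ge(k-\tau(T))n/2-\eta n\) and \(\sigma\le\lfloor k/2\rfloor\), which together give \(e(G[A])\ge(\sigma-1+\delta)|A|\). The lemma then forbids such an edge, a contradiction. Alternatively, \(u\) must miss \(\Omega(n)\) vertices of \(A\), giving a linear loss \(e(G)<\Psi(n)\).

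Checking that \(k-\tau(T)\ge\sigma\) holds well enough for the density hypothesis is where I expect the main care to be needed. I would verify it using \(\tau(T)\le k-\sigma\)-type bounds from the centroid structure.
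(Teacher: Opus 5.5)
You have a genuine gap in the no-core case, which is the heart of the lemma. There you aim for a contradiction with $e(G)\ge\Psi(n)$, a strict inequality, or a deletion set $S$. But members of $\mathcal G(n)$ can contain no core with parameter $\ell$. For example, suppose $a=(n+k-1)/2$ is an integer divisible by $2k-1$, and let $J=\frac{a}{2k-1}K_{2k-1}$. Then $e(J)=(k-1)a=g(a)$, $\Psi(n)=a^2$, and $J\vee\ind{n-a}\in\mathcal G(n)$. Every vertex of $J$ has only $2k-2$ neighbours in $J$, so this graph has no core.

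For large $n$ such a graph is extremal by Theorem~\ref{thm:main}. It has exactly $\Psi(n)$ edges, and it admits no set $S$, since that would give $\ex(n-2t_0,F)>\Psi(n-2t_0)$ via the increment bound. So no linear gain exists to be extracted. After ruling out $P\ne R$, the target must be to prove $G\in\mathcal G(n)$.

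Your treatment of $P\ne R$ is essentially the paper's. Comparison with $J\vee\ind{|B|+|R\setminus P|}$ gives $\overline e(R\setminus P,A\cup P)\le3\eta n$, and then Lemma~\ref{lem:pinnededge} forbids $uv$. The density check is immediate from $\sigma,\tau(T)\le\lfloor k/2\rfloor$.

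What is missing is the proof that $B$ is independent. This is where the alternative $|R\cap Y|\Delta(G[A])\le\eta n$ of Lemma~\ref{lem:cleaning}(iii) is used. That bound does not say that $R\cap Y$ sees few vertices of $B$; in the extremal configuration $R$ is complete to $B$. The argument runs as follows.
\begin{itemize}
\item If $B$ contains an edge, choose $v\in B$ with $1\le d_B(v)\le2(k-1)$. This is possible by Lemma~\ref{lem:decomp}, since $G[B]$ contains no member of $\M(F)$.
\item Then $d_Y(v)\le2(k-1)+|R\cap Y|$, and \eqref{eq:admissible-local} gives $\overline d_A(v)\Delta(G[A])\le(\gamma+2k-1)\eta n$.
\item Every $u\in N(v)\cap B$ has $\overline d_A(u)\le\Delta(G[B])+|R|+\gamma$, so Lemma~\ref{lem:pinnededge} rules out $uv$.
\end{itemize}
Hence $B$ is independent. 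With $J=G[A\cup R]$ you get $\Psi(n)\le e(G)\le v(J)|B|+g(v(J))\le\Psi(n)$, and equality forces $G=J\vee\ind{|B|}\in\mathcal G(n)$. The increment bound plays no role here; it belongs to the progressive induction.

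Your Step~1 also fails as written. Condition \eqref{eq:admissible-local} is equivalent to $d(x)\ge|Y|-\gamma$ for $x\in X$, and symmetrically for $Y$. This is a lower bound on total degree, so a vertex of degree well below $n/2$ violates it on either side. Reinserting a bounded number of violators therefore cannot help, and Lemma~\ref{lem:cleaning} needs the condition for every vertex. Moreover, $\bigl||X|-n/2\bigr|\le\gamma\sqrt n$ is too weak to turn $d(x)\ge n/2-K$ into $d(x)\ge|Y|-\gamma$.

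Your deletion idea can be repaired as follows.
\begin{itemize}
\item A single vertex of degree at most $n/2-K$, together with $2t_0-1$ vertices of degree at most $n/2+C$, already gives $S$ once $K$ is large in terms of $t_0,C,k$. The latter vertices exist by averaging, since $e(G)\le n^2/4+O(n)$.
\item Otherwise $\delta(G)>n/2-K$. Summing $d_Y(x)\ge n/2-K-d_X(x)$ over $x\in X$ and comparing with $e(X,Y)\le|X||Y|$ gives $|Y|\ge n/2-O(1)$.
\item Hence $\bigl||X|-n/2\bigr|=O(1)$, and \eqref{eq:admissible-local} follows.
\end{itemize}
The paper instead uses symmetrization: replacing the neighbourhood of $x\in X$ by $Y_0$ keeps the graph $F$-free, so extremality gives $d(x)\ge|Y_0|$ for every vertex at once.

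Finally, for $m\ge2$ an edge blowup $D$ itself does not satisfy $F\subseteq D\vee T_1(r)$, because $F-I$ has $k(m-1)$ isolated vertices. Attach that many leaves to $D$ before applying Theorem~\ref{thm:weak}.
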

\begin{proof}
Let \(G\in\EX(n,F)\), where \(n\ge n_1\), with \(n_1\) chosen
below depending only on \(F\). Suppose that
\(G\notin\mathcal G(n)\) and that no set \(S\subseteq V(G)\) of size
\(2t_0\) satisfies the inequality in the statement. Fix a blowup \(D\) of
\(T\). By the construction in the proof of Lemma~\ref{lem:decomp},
we have
\(F\subseteq\bigl(D\cup\ind{k(m-1)}\bigr)\vee\ind{mk}\).
Attach \(k(m-1)\) new leaves to \(D\), obtaining a tree \(D'\). Then
\(D\cup\ind{k(m-1)}\subseteq D'\) as an ordinary, not necessarily
induced, subgraph, and \(F\subseteq D'\vee\ind{v(F)}\). Fix
\(1\gg\varepsilon_0>0\). Apply Theorem~\ref{thm:weak} with
\[
 \mathcal H=\{F\},\quad p=2,\quad r=v(F),\quad
 \lambda=0,\quad \varepsilon=\varepsilon_0,\quad D=D'.
\]
Here \(T_{p-1}((p-1)r)=T_1(v(F))=\ind{v(F)}\), so the hypotheses hold.
By Theorem~\ref{thm:weak}, there is a partition
\(V(G)=X\dot\cup Y\) minimizing \(e(G[X])+e(G[Y])\), with
\(|X|,|Y|=n/2+O(\sqrt n)\) and
\(e(G[X])+e(G[Y])+\overline e(X,Y)=O(n)\). Let
\(X_0=\{x\in X:d_X(x),\overline d_Y(x)\le\varepsilon_0n\}, \quad Y_0=\{y\in Y:d_Y(y),\overline d_X(y)\le\varepsilon_0n\}\).
By Theorem~\ref{thm:weak}\textup{(iv)},
\(|X\setminus X_0|+|Y\setminus Y_0|=O(1)\). For \(x\in X\), define
\(G'\) on \(V(G)\) by
\(E(G')=E(G-x)\cup\{xy:y\in Y_0\}\). Any copy \(F'\) of \(F\) in
\(G'\) must contain \(x\), and \(W=N_{F'}(x)\subseteq Y_0\). But
\[
 \bigl|(X_0\setminus V(F'))\cap\bigcap_{y\in W}N_G(y)\bigr|
 \ge |X_0|-v(F)\varepsilon_0n-v(F)>0.
\]
Replacing \(x\) by a vertex in this set gives a copy of \(F\) in
\(G\). The graph \(G'\) does not contain \(F\) as a subgraph, so extremality gives
\(d_G(x)\ge|Y_0|\), and hence
\(\overline d_Y(x)\le d_X(x)+|Y\setminus Y_0|\). Interchanging
\(X\) and \(Y\) gives a fixed
\(\gamma\ge1\) such that, for every \(x\in X\) and \(y\in Y\),
\begin{equation}\label{eq:replacement}
 \overline d_Y(x)\le d_X(x)+\gamma,
 \quad
 \overline d_X(y)\le d_Y(y)+\gamma.
\end{equation}
Increase \(\gamma\), if necessary, so that
\(\bigl||X|-n/2\bigr|\le \gamma\sqrt n\) and
\(e(G[X])+e(G[Y])+\overline e(X,Y)\le \gamma n\). The partition
\(X\dot\cup Y\) is then admissible with parameter \(\gamma\).

By Lemma~\ref{lem:core}, the assumptions that
\(G\notin\mathcal G(n)\) and that no set of size \(2t_0\) satisfies
the deletion inequality imply that \(G\) contains no core with parameter \(\ell\). Choose a centroid \(z\) of \(T\), and let \(\sigma\) be
the largest order of a component of \(T-z\). Let \(\varepsilon>0\)
be given by Lemma~\ref{lem:pinnededge} with \(c=1/3\) and
\(\delta=1/2\). Choose
\[
 \min\{1,\varepsilon,\gamma^{-1}\}\gg\eta>0,
\]
and let \(s\) be supplied by Lemma~\ref{lem:cleaning} for this
\(\eta\). Finally choose \(n_1\) sufficiently large in terms of all
preceding constants and above the thresholds of the cited results.
In particular, all subsequent \(O(1)\) and \(O(\sqrt n)\) errors
are smaller than \(\eta n\).
Apply Lemma~\ref{lem:cleaning}, and let \(R,P,A,B\) be the resulting sets. Set
\(Q=R\setminus P\) and \(J=G[A\cup P]\). Since \(|X|,|Y|=n/2+O(\sqrt n)\) and \(|R|\le s\),
\(|A|,|B|\ge n/3\). Moreover,
\(\overline e(A,B)\le \gamma n\), and
\(\Delta(G[A])+\Delta(G[B])\le\eta n\). For \(x\in A\), we have
\(\overline d_B(x)\le d_A(x)+|R|+\gamma\) by \eqref{eq:replacement}; for every \(y\in B\),
\(\overline d_A(y)\le d_B(y)+|R|+\gamma\). Hence
\[
 \max_{x\in A}\overline d_B(x)+
 \max_{y\in B}\overline d_A(y)
 \le\eta n+2|R|+2\gamma\le2\eta n<\varepsilon n.
\]
Thus \(A,B\) satisfy Lemma~\ref{lem:pinnededge}\textup{(a)--(c)} and
\(\Delta(G[A])\le\varepsilon n\) in \textup{(d)}. We next verify the edge
bound in \textup{(d)}.

Since \(z\) is a centroid, \(\sigma\le\lfloor k/2\rfloor\). By K\H{o}nig's theorem,
\(\tau(T)=\nu(T)\le\lfloor k/2\rfloor\), and hence
\(k-\tau(T)\ge\sigma\). Since
\(|R|\le s\) and \(|X|=n/2+O(\sqrt n)\), we have
\(|A|=n/2+O(\sqrt n)\). By Lemma~\ref{lem:cleaning}\textup{(i)},
\[
 e(G[A])-\bigl(\sigma-\tfrac12\bigr)|A|
 \ge \frac{k-\tau(T)-\sigma}{2}n
      +\bigl(\tfrac14-\eta\bigr)n-O(\sqrt n)>0.
\]
Thus
\(e(G[A])\ge(\sigma-1+1/2)|A|\), as required in
Lemma~\ref{lem:pinnededge}.

By Lemma~\ref{lem:decomp},
\(H=J\vee\ind{|B|+|Q|}\), with independent vertex set \(B\cup Q\),
does not contain \(F\) as a subgraph. Since \(v(J)+|B|+|Q|=n\),
\(e(H)=v(J)(|B|+|Q|)+e(J)\le\Psi(n)\). Counting edges using
\(V(G)=A\dot\cup P\dot\cup B\dot\cup Q\), we obtain
\[
 \begin{aligned}
 e(G)-e(H)
 &=e(G[B])+e(G[Q])-\overline e(A,B)
   -\overline e(Q,A\cup P)+e(B,R)-|B||P|\\
 &\le-\overline e(Q,A\cup P)+3\eta n,
 \end{aligned}
\]
by Lemma~\ref{lem:cleaning}\textup{(i)--(ii)},
\(|Q|\le s\), and the choice of \(n_1\). Since
\(e(G)\ge\Psi(n)\ge e(H)\), it follows that
\begin{equation}\label{eq:Q-missing}
 \overline e(Q,A\cup P)\le3\eta n.
\end{equation}

We next prove that \(P=R\). Otherwise, by
Lemma~\ref{lem:cleaning}\textup{(iii)}, there is an edge \(uv\) with
\(u\in Q\), \(v\in B\), and
\(d_Y(v)\Delta(G[A])\le\eta n\). By \eqref{eq:Q-missing},
\(\overline d_A(u)\le3\eta n<\varepsilon n\). Since \(A\subseteq X\), we obtain from \eqref{eq:replacement} that
\[
 \overline d_A(v)\Delta(G[A])
 \le(d_Y(v)+\gamma)\Delta(G[A])
 \le(\gamma+1)\eta n<\varepsilon n.
\]
This contradicts Lemma~\ref{lem:pinnededge}. Therefore \(P=R\),
\(|R\cap Y|\Delta(G[A])\le\eta n\), and \(J=G[A\cup R]\) does not contain any member of \(\M(F)\) as a subgraph.

Suppose that \(B\) contains an edge. The induced subgraph on
\(\{b\in B:d_B(b)>0\}\) has average degree at most \(2(k-1)\), by
Lemma~\ref{lem:cleaning}\textup{(i)} and Lemma~\ref{lem:decomp}.
Choose \(v\in B\) with \(1\le d_B(v)\le2(k-1)\). Since
\(Y=B\dot\cup(R\cap Y)\),
\(d_Y(v)\le2(k-1)+|R\cap Y|\). By \eqref{eq:replacement},
\(\overline d_A(v)\Delta(G[A])
\le(\gamma+2k-1)\eta n<\varepsilon n\).
For every \(u\in N(v)\cap B\), we have
\(\overline d_A(u)\le\Delta(G[B])+|R|+\gamma\le2\eta n<\varepsilon n\)
by Lemma~\ref{lem:cleaning}\textup{(i)}, \eqref{eq:replacement}, and
the choice of \(n_1\). The edge \(uv\) contradicts
Lemma~\ref{lem:pinnededge}. Hence \(B\) is
independent, and
\[
 \Psi(n)\le e(G)\le v(J)|B|+e(J)
 \le v(J)|B|+g(v(J))\le\Psi(n).
\]
Equality gives \(\overline e(V(J),B)=0\), \(e(J)=g(v(J))\), and
\(v(J)|B|+g(v(J))=\Psi(n)\). Since
\(V(G)=V(J)\dot\cup B\), we obtain
\(G=J\vee\ind{|B|}\in\mathcal G(n)\), a contradiction.
\end{proof}

\begin{proof}[Proof of Theorem~\ref{thm:main}]
By Lemma~\ref{lem:decomp}, every member of \(\mathcal G(n)\) is
\(F\)-free and has \(\Psi(n)\) edges. In particular,
\(\ex(n,F)\ge\Psi(n)\). We complete the proof by
Simonovits's progressive induction~\cite[p.~284]{Simonovits1968},
using the following direct descent.

Fix \(t_0\) as in Lemma~\ref{lem:core}, put \(h=2t_0\), and let
\(n_1\) be given by Lemma~\ref{lem:reduction}. Define
\[
 \phi(n)=\ex(n,F)-\Psi(n)\ge0,
 \qquad K=\max\{\phi(r):1\le r\le n_0\},
\]
where \(n_0>\max\{n_1,h\}\) is a fixed integer.
If \(n>n_0\) and some \(G\in\EX(n,F)\) does not belong to
\(\mathcal G(n)\), Lemma~\ref{lem:reduction} gives
\[
 \ex(n,F)\le\ex(n-h,F)+t_0(n-t_0)
              +(k-1)t_0-\binom{k}{2}-1.
\]
Together with the increment bound in Lemma~\ref{lem:decomp}, this
implies
\begin{equation}\label{eq:progressive-descent}
 \phi(n)\le\phi(n-h)-1.
\end{equation}

We claim that every \(G\in\EX(n,F)\) belongs to \(\mathcal G(n)\)
whenever \(n>n_0+hK\). Otherwise, start with such a graph outside
\(\mathcal G(n)\) and apply \eqref{eq:progressive-descent}.
Since \(\phi(n-h)>0\), no extremal graph of order \(n-h\) can
belong to \(\mathcal G(n-h)\); hence the same inequality can be
applied again as long as the current order exceeds \(n_0\).
Let \(q\) be the least positive integer for which
\(r=n-qh\le n_0\). Then \(n_0-h<r\le n_0\) and \(q>K\), so
iteration yields
\[
 0\le\phi(n)\le\phi(n-h)-1\le\cdots
 \le\phi(n-qh)-q\le K-q<0,
\]
a contradiction. Thus \(\EX(n,F)\subseteq\mathcal G(n)\) for
all sufficiently large \(n\), which also gives
\(\ex(n,F)=\Psi(n)\). The reverse inclusion follows from the
first sentence of the proof.
\end{proof}

\section*{Declaration of generative AI and AI-assisted technologies in the paper preparation process}

The authors first obtained the results for the Cartesian products of an odd cycle with a path and with a star. Using similar techniques and with the assistance of GPT-6, they extended these results to arbitrary trees. GPT-6 also assisted with the writing and typesetting of this paper. The authors reviewed the AI-assisted material and take full responsibility for the correctness of the results and proofs and for the content of this paper.

\end{document}